\DeclareRobustCommand{\rchi}{{\mathpalette\irchi\relax}}
\newcommand{\irchi}[2]{\raisebox{\depth}{$#1\chi$}}
\newtheorem{theorem}{Theorem}[section]
\newtheorem{prop}[theorem]{Proposition}
\newtheorem{lemma}[theorem]{Lemma}
\theoremstyle{definition}
\newtheorem{definition}[theorem]{Definition}
\newtheorem*{theorem*}{Theorem}
\theoremstyle{remark}
\newtheorem{remark}[theorem]{Remark}
\newcommand{\Obsolete}[1]{
    }
\begin{document}

\raggedbottom

\numberwithin{equation}{section}

\title
    [Euler equations in a critical Sobolev space]
    {Local Existence for the 2D Euler Equations in a Critical Sobolev Space}

\author{Elaine Cozzi}
\address{Oregon State University, Department of Mathematics}
\curraddr{}
\email{elaine.cozzi@oregonstate.edu}

\author{Nicholas Harrison}
\address{Oregon State University, Department of Mathematics}
\curraddr{}
\email{harrnich@oregonstate.edu}
\begin{abstract}
In the seminal work \cite{bourgain}, Bourgain and Li establish strong ill-posedness of the 2D incompressible Euler equations with vorticity in the critical Sobolev space $W^{s,p}(\mathbb{R}^2)$ for $sp=2$ and $p\in(1,\infty)$. In this note, we establish short-time existence of solutions with vorticity in the critical space $W^{2,1}(\mathbb{R}^2)$. Under the additional assumption that the initial vorticity is Dini continuous, we prove that the $W^{2,1}$-regularity of vorticity persists for all time. 
\end{abstract}

\maketitle

\section{Introduction}
In this paper, we investigate the persistence of regularity of solutions to the two-dimensional incompressible Euler equations, given by 
\begin{align*} \label{E}\tag{E}
    \begin{cases}
        \partial_t u +(u\cdot\nabla) u=-\nabla p &\text{in }[0,T]\times \mathbb{R}^2,
        \\ \nabla\cdot u=0 &\text{in }[0,T]\times \mathbb{R}^2, 
        \\ u\big|_{t=0}=u_0 &\text{in }\mathbb{R}^2,
    \end{cases}
\end{align*}
where $u:[0,T]\times\mathbb{R}^2\to\mathbb{R}^2$ represents the velocity field of an ideal fluid and $p:[0,T]\times\mathbb{R}^2\to\mathbb{R}$ denotes the scalar pressure. We define the \textit{vorticity} as the curl of the velocity, i.e. $\omega=\text{curl}(u)=\partial_{1}u_2-\partial_{2}u_1$.  Applying the curl operator to (\ref{E}), we obtain the vorticity equation,
\begin{equation*} \label{V}\tag{V}
\begin{cases}
    \partial_t\omega+u\cdot\nabla \omega=0 &\text{in }[0,T]\times \mathbb{R}^2,
    \\ u=\nabla^\perp(-\Delta)^{-1}\omega=K*\omega &\text{in }[0,T]\times \mathbb{R}^2,
    \\ \omega\big|_{t=0}=\omega_0 &\text{in }\mathbb{R}^2,
\end{cases}    
\end{equation*} 
where the relation $u=K\ast\omega$ is referred to as the Biot-Savart Law, with $$K(x)=\frac{1}{2\pi}\frac{x^\perp}{|x|^2}$$ the Biot-Savart kernel.  We refer to convolution with $K$ as the Biot-Savart operator. 

Global existence of weak solutions to (\ref{V}) with vorticity in $L^1\cap L^{\infty}(\mathbb{R}^2)$ was established by Yudovich in \cite{Yudovich}. In what follows, we are interested in the persistence of regularity of Yudovich weak solutions under the assumption that initial vorticity belongs to the critical Sobolev space $W^{2,1}(\mathbb{R}^2)$. 

The notion of criticality can be understood in view of the following estimate for (\ref{V}), which one can establish using standard techniques:  
\begin{equation}\label{criticalinequality}
\frac{d}{dt}\|\omega(t)\|_{W^{s,p}}\leq C\|\nabla u(t)\|_\infty \|\omega(t)\|_{W^{s,p}}
\end{equation}
for some dimensional constant $C>0$ independent of $\omega_0\in W^{s,p}(\mathbb{R}^2)$. In order to close this inequality and obtain short-time existence of solutions, a bound of the form $\|\nabla u(t)\|_\infty\leq C \|\omega(t)\|_{W^{s,p}}$ is required. When $sp>2$, this bound follows from Sobolev embeddings and boundedness properties of the Biot-Savart operator. In fact, in this setting, one can establish a stronger Beale-Kato-Majda \cite{bkm} type of estimate, in which the velocity gradient grows logarithmically with respect to the Sobolev norm of the vorticity, yielding global existence of solutions. When $sp\leq 2$, however, the situation is considerably more complicated. For example, when $sp=2$ and $p\in(1,\infty)$, the estimate $\|\nabla u(t)\|_\infty\leq C \|\omega(t)\|_{W^{s,p}}$ just barely fails to hold. Consequently, we call the sets of parameters $(s,p)$ supercritical, critical, and subcritical when they satisfy $sp> 2$, $sp=2$, and $sp< 2$, respectively. 
\Obsolete{One can also understand criticality in terms of scaling, in the sense that the $L^p$ norm of the highest order derivatives possess the same scaling as that of (\ref{V}). Specifically, note that if $\omega$ solves (\ref{V}) and we define the transformation $$\omega_\lambda(t,x):=\omega( t,\lambda x),$$ then $\omega_{\lambda}$ also solves (\ref{V}). It is easy to check, for example, that $\| \nabla\omega_\lambda \|_{L^2} = \| \nabla\omega \|_{L^2}$ and that $\| \nabla^2\omega_\lambda \|_{L^1} = \| \nabla^2\omega \|_{L^1}$.}

In the supercritical regime $sp>2$ with $1<p<\infty$, Kato and Ponce \cite{katoponce} obtain global-in-time well-posedness of the vorticity equation (\ref{V}) in $W^{s,p}(\mathbb{R}^2)$. The boundedness of the Biot-Savart operator on $L^p(\mathbb{R}^2)$ for $p\in (1,\infty)$ implies an equivalence of the Kato-Ponce result to global well-posedness of (\ref{E})
in $W^{s+1,p}(\mathbb{R}^2)^2$ for the same range of $s,p$. The critical regime $sp=2$ was mostly settled in the seminal work of Bourgain and Li \cite{bourgain}, where the authors establish strong ill-posedness of (\ref{V}) when $1<p<\infty$. In \cite{bourgain}, a lack of an a priori bound on the Lipschitz norm of the velocity is key in constructing a perturbation that grows arbitrarily fast in the $W^{s,p}$-norm.  For the subcritical case $sp<2$, Bahouri and Chemin \cite{BC} establish a lower bound on the decay rate with respect to time of smoothness to the solution to (\ref{V}) with initial data in $W^{s,p}(\mathbb{R}^2)$.  The authors also provide an example demonstrating that their decay rate is sharp.

To our knowledge, the critical case $(s,p)=(2,1)$ remains unresolved. As one might expect, difficulties arise when $p=1$ due to non-reflexivity of the $L^1$-based Sobolev spaces and to the unboundedness of singular integral operators on $L^1$. On the other hand, the Sobolev embedding theorem gives $$W^{s,p}(\mathbb{R}^2)\hookrightarrow W^{s', p'}(\mathbb{R}^2)$$   
for $sp = s'p' = 2$ and $1\leq s' \leq s \leq 2$, implying that $W^{2,1}(\mathbb{R}^2)$ is the most regular of the critical Sobolev spaces. Indeed, $W^{2,1}(\mathbb{R}^2)$ functions are uniformly continuous (see Remark \ref{dini}), while $W^{s,p}(\mathbb{R}^2)$ functions for $s<2$ and $sp=2$ may have singularities.

Some well-posedness results have been obtained for the 2D Euler equations in $L^1$-based spaces. Chae \cite{chae} showed global well-posedness of (\ref{E}) in the related $L^1$-based scaling critical Triebel-Lizorkin spaces $F^3_{1,q}$ for $q\in[1,\infty]$. In particular, the case $q=2$ corresponds to a higher regularity local Hardy space which embeds into $W^{3,1}(\mathbb{R}^2)$; however, $F^3_{1,q}$ does not coincide with $W^{3,1}$ for any value of $q$.  We also refer the reader to \cite{crippa2017} and \cite{bohun2016} for existence results in a lower regularity, $L^1$-based setting.

Before stating our main result, we mention that following the work of Bourgain-Li \cite{bourgain}, several additional ill-posedness results for (\ref{E}) were obtained. See, for example, \cite{EJ} for a construction of solutions whose vorticity exhibits instantaneous blow-up in $H^1(\mathbb{R}^2)$. Also see  \cite{BL2, EM} for strong ill-posedness when initial velocity belongs to $C^k(\mathbb{R}^2)$, $k\geq 1$ an integer.    

Our main result addresses existence of a weak solution (see Definition \ref{Weakvorticity}) in the critical case $(s,p)=(2,1)$. 
 \begin{theorem}\label{Shorttimesec1} Given $\omega_0\in W^{2,1}(\mathbb{R}^2)$, there exists $T>0$ and a weak solution $\omega$ to (\ref{V}) on $[0,T]$ satisfying $$\omega\in L^\infty([0,T];W^{2,1}(\mathbb{R}^2))\cap \text{Lip}([0,T];W^{1,1}(\mathbb{R}^2)).$$
\end{theorem}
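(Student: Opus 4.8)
The plan is to obtain the solution as a limit of smooth solutions, with the whole argument resting on a single a priori bound that is available for $p=1$ but fails for $p\in(1,\infty)$: namely $\|\nabla u\|_{\infty}\lesssim\|\omega\|_{W^{2,1}}$. Since $W^{2,1}(\mathbb{R}^2)\subset L^1\cap L^\infty$, the datum $\omega_0$ lies in the Yudovich class \cite{Yudovich}, so after mollifying $\omega_0^n:=\omega_0\ast\rho_{1/n}$ (smooth, with $\omega_0^n\to\omega_0$ in $W^{2,1}$) I may invoke classical 2D theory to produce global smooth solutions $\omega^n$ with velocities $u^n=K\ast\omega^n$, each conserving all $L^q$ norms. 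I would then prove uniform-in-$n$ bounds on $[0,T]$ for a time $T$ depending only on $\|\omega_0\|_{W^{2,1}}$, and pass to the limit.

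The key step is the velocity-gradient bound. Writing $\nabla u=T\omega$ for the zeroth-order Calder\'on--Zygmund operator arising from the Biot--Savart law, one has $\nabla^2 u=T\nabla\omega$. I would run the chain $\nabla\omega\in\dot W^{1,1}(\mathbb{R}^2)\hookrightarrow L^{2,1}(\mathbb{R}^2)$ (the sharp Sobolev--Lorentz embedding), then $T:L^{2,1}\to L^{2,1}$ boundedness gives $\nabla^2 u\in L^{2,1}$, and finally the borderline embedding $\dot W^{1,(2,1)}(\mathbb{R}^2)\hookrightarrow L^\infty$ yields
\[
\|\nabla u\|_{\infty}\lesssim\|\nabla^2 u\|_{L^{2,1}}\lesssim\|\nabla\omega\|_{L^{2,1}}\lesssim\|\nabla^2\omega\|_{L^1}\le\|\omega\|_{W^{2,1}}.
\]
This is exactly the borderline that \eqref{criticalinequality} needs, and is the reason $p=1$ escapes the Bourgain--Li obstruction. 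Feeding it into \eqref{criticalinequality}, and separately controlling the genuinely top-order contribution $\nabla^2 u\cdot\nabla\omega$ by $\|\nabla^2 u\|_{L^2}\|\nabla\omega\|_{L^2}\lesssim\|\nabla\omega\|_{L^2}^2\lesssim\|\omega\|_{W^{2,1}}^2$ (using $L^2$-boundedness of $T$ and the embedding $W^{2,1}\hookrightarrow H^1$ noted above), I obtain the Riccati-type inequality $\frac{d}{dt}\|\omega^n\|_{W^{2,1}}\le C\|\omega^n\|_{W^{2,1}}^2$. Hence $\sup_n\sup_{[0,T]}\|\omega^n\|_{W^{2,1}}<\infty$ for $T\sim(C\|\omega_0\|_{W^{2,1}})^{-1}$.

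For the time regularity I would use the equation directly: since $\partial_t\omega^n=-u^n\cdot\nabla\omega^n$, the bounds $\|u^n\|_\infty\lesssim\|\omega^n\|_{L^1\cap L^\infty}$ and $\|\nabla u^n\|_\infty\lesssim\|\omega^n\|_{W^{2,1}}$ give $\|\partial_t\omega^n\|_{W^{1,1}}\lesssim\|\omega^n\|_{W^{2,1}}^2$ uniformly on $[0,T]$, so that $\{\omega^n\}$ is uniformly bounded in $\mathrm{Lip}([0,T];W^{1,1})$. This uniform equicontinuity in time, together with the spatial bound, lets me extract (Aubin--Lions / Arzel\`a--Ascoli) a subsequence converging strongly in $C([0,T];W^{1,1}_{\mathrm{loc}})$, which suffices to pass to the limit in the weak formulation of \eqref{V}, including the nonlinear term $u^n\cdot\nabla\omega^n$, and to inherit the time-Lipschitz bound into $W^{1,1}$.

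The hard part will be the final upgrade of the limit into $W^{2,1}$, where the non-reflexivity of $L^1$ bites: the uniform $L^1$ bound on $\nabla^2\omega^n$ only guarantees, a priori, weak-$*$ subsequential convergence to a finite \emph{measure}, so lower semicontinuity alone would place $\omega(t)$ in a $BV$-type space rather than $W^{2,1}$. To force absolute continuity of the limiting second-derivative measure I would establish equi-integrability of $\{\nabla^2\omega^n(t)\}$ and invoke Dunford--Pettis to obtain weak $L^1$ compactness rather than mere weak-$*$ measure compactness; equi-integrability should propagate from that of $\{\nabla^2\omega_0^n\}$ (which holds because this family converges in $L^1$), since the transport proceeds along a volume-preserving flow and the stretching terms are controlled in $L^1$ by the uniform $W^{2,1}$ bound. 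This equi-integrability argument, rather than any single estimate, is where I expect the genuine work to lie.
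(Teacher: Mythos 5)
Your overall strategy coincides with the paper's: mollify the data, run the Riccati-type a priori estimate built on the borderline bound $\|\nabla u\|_\infty\lesssim\|\omega\|_{W^{2,1}}$, get the uniform time-Lipschitz bound in $W^{1,1}$, extract a strongly convergent subsequence, and recover the $W^{2,1}$ regularity of the limit via Dunford--Pettis. Your route to the velocity bound (Calder\'on--Zygmund boundedness on $L^{(2,1)}$ followed by the embedding of $\dot W^{1,(2,1)}(\mathbb{R}^2)$ into $L^\infty$) is a legitimate variant of the paper's more elementary argument in Lemma \ref{velgradbound} (H\"older--Lorentz duality against the kernel, plus $\|\omega\|_\infty\leq\|\partial_1\partial_2\omega\|_{L^1}$); both rest on the same Sobolev--Lorentz embedding (\ref{sobolevlorentz}). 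The compactness tools also differ only cosmetically: you use Aubin--Lions/Arzel\`a--Ascoli locally where the paper uses Fr\'echet--Kolmogorov--Riesz globally (which forces it to prove equitightness as well); either works for the weak formulation, though with merely local convergence of $\omega^n$ you should say a word about why $u^n\to u$, since the Biot--Savart kernel is nonlocal (split $K$ into near and far parts and use the uniform $L^1\cap L^\infty$ bound).

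The genuine gap sits exactly where you flagged it, and your stated reason for optimism is insufficient as written. You argue that equi-integrability of $\{\nabla^2\omega^n(t)\}$ propagates because the flow is volume preserving ``and the stretching terms are controlled in $L^1$ by the uniform $W^{2,1}$ bound.'' But an $L^1$ bound on a family never yields its equi-integrability --- that is precisely the failure mode you are trying to rule out. Differentiating $\omega^n(t)=\omega_0^n\circ X_n^{-t}$ twice (the paper's (\ref{chainrule})) produces two terms: the term $(\nabla X_n^{-t})^T(\nabla^2\omega_0^n)(X_n^{-t})\,\nabla X_n^{-t}$ is indeed handled by volume preservation together with the uniform bound $\|\nabla X_n^{-t}\|_\infty\leq e^{MT}$, but the stretching term $(\nabla\omega_0^n)(X_n^{-t})\cdot\nabla^2X_n^{-t}$ requires two further ingredients absent from your sketch: (i) a uniform bound $\|\nabla^2X_n^{-t}\|_{L^2}\leq e^{CMT}$, proved by a second Gr\"onwall argument which is only possible because $\|\nabla^2u^n\|_{L^2}\lesssim\|\nabla\omega^n\|_{L^2}\leq M$ (an $L^2$, not $L^1$, estimate); and (ii) the Cauchy--Schwarz step
$\int_A\bigl|(\nabla\omega_0^n)(X_n^{-t})\cdot\nabla^2X_n^{-t}\bigr|\leq\|\nabla^2X_n^{-t}\|_{L^2}\bigl(\int_A|(\nabla\omega_0^n)(X_n^{-t})|^2\bigr)^{1/2}$,
which reduces matters to equi-integrability of $|\nabla\omega_0^n|^2$ --- available because $\nabla\omega_0\in L^2$ by Sobolev embedding --- composed with a measure-preserving map. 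Separately, note that Dunford--Pettis on all of $\mathbb{R}^2$ (Theorem \ref{DP}) requires equitightness in addition to equi-integrability; either prove tightness (as the paper does, again via flow estimates) or argue on balls and use the uniform mass bound to conclude that the limiting second derivatives are $L^1(\mathbb{R}^2)$ functions rather than merely finite measures.
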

\noindent We prove Theorem \ref{Shorttimesec1} in Section \ref{ProofOfMainTheorem}.

Due to lack of boundedness of Calderon-Zygmund operators on $L^1(\mathbb{R}^2)$, we cannot immediately conclude from Theorem \ref{Shorttimesec1} that (\ref{E}) is locally well-posed in $W^{3,1}(\mathbb{R}^2)$. However, one can utilize the equivalence $W^{s,p}=B^{s}_{p,p}$ for $p>1$ or $p=1$ and $s\notin \mathbb{Z}$ (see \cite{triebel}), where $B^{s}_{p,q}$ denotes the Besov space, to obtain an arbitrarily small loss of regularity of the velocity with initial data in $W^{3,1}(\mathbb{R}^2)^2$. We do not include the details here; instead, we refer the reader to the recent work \cite{pakhwang}, which demonstrates this arbitrarily small loss of regularity by establishing short time existence (global existence when $d=2$) of velocity solutions in the Triebel-Lizorkin spaces $F^{d+1}_{1,\infty}(\mathbb{R}^d)$. These spaces lie between the spaces $W^{d+1,1}(\mathbb{R}^d)$ and $W^{d+1-\varepsilon,1}(\mathbb{R}^d)$ for any $\varepsilon>0$.  
\Obsolete{We have the following result, which we also prove in Section \ref{section3}. 
\begin{theorem}\label{velocitycorollary}
    Let $\epsilon>0$. Suppose $u_0\in W^{3,1}(\mathbb{R}^2)^2$. There exists $T>0$ and $u\in L^\infty([0,T];W^{3-\varepsilon}(\mathbb{R}^2)^2)$ such that $u$ solves ($E$) on $[0,T]\times \mathbb{R}^2.$
\end{theorem}
 \noindent Theorem \ref{velocitycorollary} also follows from }
 
 In Section \ref{section4}, we show that under an additional Dini continuity assumption on the initial vorticity, the vorticity solution to (\ref{V}) remains in $W^{2,1}(\mathbb{R}^2)$ for all time (see Theorem \ref{Globaltheorem}). The additional continuity assumption allows us to control the velocity gradient (see Proposition \ref{velgraddiniprop}), which yields global existence. Similar calculations are given in \cite{koch} on a bounded domain. We follow \cite{koch}, but modify the arguments to fit our setting. 
 
 \Obsolete{We have the following theorem.  
 \begin{theorem}\label{Globaltheorem} If $\omega_0\in C_D\cap W^{2,1}(\mathbb{R}^2)$, then the solution $\omega$ obtained in Theorem \ref{Shorttimesec1} belongs to $L^\infty_{loc}(\mathbb{R}_+;C_D\cap W^{2,1} (\mathbb{R}^2))$. Furthermore, $\|\omega(t)\|_{W^{2,1}\cap C_D}:=\|\omega(t)\|_{W^{2,1}}+|\omega(t)|_{C_D}$ grows at most doubly exponentially with $t$.
\end{theorem}
  \noindent We remark that the Dini continuity assumption in Theorem \ref{Globaltheorem} is stronger than the uniform continuity already satisfied by $W^{2,1}(\mathbb{R}^2)$ functions (see Remark \ref{dini} below), but is weaker than an $\alpha-$H\"older condition with $\alpha>0$, for which global existence is established in \cite{wolibner1933} (see \cite{chemin} for another approach). }
  
  While our results shed light on the short-time behavior of solutions to (\ref{V}) with initial vorticity in $W^{2,1}(\mathbb{R}^2)$, the question of global existence in this space remains unsolved. In particular, without a sufficiently nice modulus of continuity on the initial vorticity, our techniques do not yield control of the $L^{\infty}$-norm of the velocity gradient by norms of the initial data, as in Proposition \ref{velgraddiniprop}, or via a logarithmic estimate of Beale-Kato-Majda type \cite{bkm}. Our techniques also fail to yield existence, even for short time, to the 3D Euler equations or the 2D quasi-geostrophic equations with vorticity in the scaling critical space $W^{3,1}$. We plan to address well-posedness for these equations in a future work.
  
  The paper is organized as follows. In Section \ref{section2}, we state some definitions, theorems, and lemmas which will be useful in the subsequent sections. In Section \ref{section3}, we prove the key a priori estimate on solutions to (\ref{V}) with initial vorticity in $W^{2,1}(\mathbb{R}^2)$. In Section \ref{ProofOfMainTheorem}, we prove Theorem \ref{Shorttimesec1}. Finally, in Section \ref{section4}, we establich global existence under an additional Dini continuity assumption on the initial vorticity. 
 \Obsolete{\\ Finally, in Section \ref{section5}, we investigate a generalized active scalar model, which we refer to as the generalized quasi-geostrophic equations, and which interpolate between (\ref{V}) and the surface quasi-geostrophic equations. Our main motivation in studying these equations is to potentially shed light on the analogous problem for the classical surface quasi-geostrophic equations and the three-dimensional Euler equations.  \\
 \\
The generalized surface quai-geostrophic equations are given by \begin{equation}\label{bsqg}\tag{$\beta$-SQG}
\begin{cases}
         \partial_t\theta+u\cdot\nabla \theta=0,
         \\u=\nabla^\perp(-\Delta)^{-1+\beta}\theta,
         \\ \theta\big|_{t=0}=\theta_0,
     \end{cases}
 \end{equation}
 where $\theta:[0,T]\times\mathbb{R}^2\to\mathbb{R}$. When $\beta=0$, (\ref{bsqg}) reduces to (\ref{V}), while, when $\beta=\frac{1}{2}$, (\ref{bsqg}) corresponds to the well-known surface quasi-geostrophic equations.  In Section \ref{section5}, we study (\ref{bsqg}) for $\beta\in (0,1/2)$.  
 \\
 \\The generalized surface quasi-geostrophic equations have been extensively studied in recent decades, and we list only a few relevant results. Short time existence and uniqueness of smooth solutions can be shown using classical methods, but global in time existence for arbitrary initial data is unknown. See \cite{cccgw} for local well-posedness of (\ref{bsqg}) in Sobolev spaces ($H^k(\mathbb{R}^2),k\geq4$). The scaling invariance of the equations suggests the critical regularity of $(s-2\beta)p=2$ for well-posedness in the Sobolev spaces $W^{s,p}(\mathbb{R}^2)$ or the Besov spaces $B^s_{p,q}(\mathbb{R}^2)$. The case of critical regularity for $\beta =\frac{1}{2}$ was studied in \cite{jeong2021}, where the author established strong ill-posedness of (\ref{bsqg}) in $W^{1+2/p,p}(\mathbb{R}^2)$ for $p\in(1,\infty)$. See also, at the same scale, \cite{wangjia} for local well-posedness in the homogeneous Besov spaces $\dot B^{1+2/p}_{p.1}$, $p\in[1,\infty)$. Note, the third Besov index of $1$ ensures a Lipschitz velocity field, which is key to obtaining propagation of regularity.\\
 \\In section \ref{section5}, we study the inhomogeneous Besov spaces of critical regularity. We obtain the following local well-posedness result for (\ref{bsqg}) in these spaces. We again restrict attention to distributional solutions, as we cannot assume classical differentiability in this setting.
\begin{theorem}
    Let $\beta\in(0,\frac{1}{2})$ and $p\in[1,2]$. Then there exists a constant $C=C(\beta,p)>0$ such that given $\theta_0\in B^{2/p+2\beta}_{p,1}(\mathbb{R}^2)$ and $0<T<(C\|\theta_0\|_{B^{2/p+2\beta}_{p,1}})^{-1}$, there exists a weak solution $\theta$ to (\ref{bsqg}) on $[0,T]\times\mathbb{R}^2$ satisfying $$\theta \in L^{\infty}([0,T];B^{2/p+2\beta}_{p,1}(\mathbb{R}^2)).$$
\end{theorem}  }  
\section{Notation and Preliminaries}\label{section2}

\subsection{Notation} 
In what follows, we let $B_r(x)$ denote the ball of radius $r>0$ about the point $x\in\mathbb{R}^d$.  We let $|A|$ denote the Lebesgue measure of a set $A\subset\mathbb{R}^d$. Finally, we let $C$ denote a generic positive constant which may change from line to line, and its dependence on any important quantities will be specified.
\begin{definition}
Let $k$ be a non-negative integer. The {\em Sobolev space} $W^{k,p}(\mathbb{R}^d)$ is the space of all distributions $f$ in $L^p(\mathbb{R}^d)$ whose derivatives of all orders up to and including $k$ are also in $L^p(\mathbb{R}^d)$.  We equip $W^{k,p}(\mathbb{R}^d)$ with the norm $$\|f\|_{W^{k,p}}=\|f\|_{L^p}+\sum_{|\sigma|=1}^k\|D^\sigma f\|_{L^p}.$$
\end{definition} 
\subsection{ODE lemmas}
 We repeatedly use the following two lemmas. The first lemma is the classical Gr\"onwall inequality, while the second is the more general Osgood's lemma (see \cite{chemin}, chapter 5 for a proof).
\begin{lemma}\label{Gronwall}
     Let $T>0$ and let $L,\alpha,\beta$ be nonnegative, continuous functions on $[0,T]$ with $\alpha$ nondecreasing. If for all $t\in[0,T]$, $$L(t)\leq \alpha(t)+\int_0^t \beta(s)L(s) \, ds,$$ then for all $t\in[0,T]$, $$L(t)\leq \alpha(t)\exp{\left(\int_0^t\beta(s)\,ds\right)}.$$ 
\end{lemma}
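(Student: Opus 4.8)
The plan is to reduce the integral inequality to a linear differential inequality for the accumulated integral and then resolve it with an integrating factor. Since $L$ is only assumed continuous and need not be differentiable, I would not attempt to build a differential inequality for $L$ itself; instead I set $G(t):=\int_0^t\beta(s)L(s)\,ds$, which is continuously differentiable because its integrand $\beta L$ is continuous, with $G(0)=0$ and $G'(t)=\beta(t)L(t)$ by the fundamental theorem of calculus. The hypothesis $L(t)\le\alpha(t)+G(t)$ then feeds directly into the expression for $G'$.

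Next I would substitute to obtain $G'(t)=\beta(t)L(t)\le\beta(t)\alpha(t)+\beta(t)G(t)$, that is, the linear differential inequality $G'(t)-\beta(t)G(t)\le\beta(t)\alpha(t)$. Writing $B(t):=\int_0^t\beta(s)\,ds$ for the integrating-factor exponent, I multiply through by $e^{-B(t)}>0$ and recognize the left-hand side as a total derivative, so that $\frac{d}{dt}\bigl(e^{-B(t)}G(t)\bigr)\le e^{-B(t)}\beta(t)\alpha(t)$. Integrating from $0$ to $t$ and using $G(0)=0$ yields $e^{-B(t)}G(t)\le\int_0^t e^{-B(s)}\beta(s)\alpha(s)\,ds$.

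The monotonicity hypothesis on $\alpha$ enters at this point: since $\alpha$ is nondecreasing, $\alpha(s)\le\alpha(t)$ for $s\in[0,t]$, so I can pull $\alpha(t)$ out of the integral and evaluate the remaining factor using $e^{-B(s)}\beta(s)=-\frac{d}{ds}e^{-B(s)}$, giving $\int_0^t e^{-B(s)}\beta(s)\,ds=1-e^{-B(t)}$. This leads to $e^{-B(t)}G(t)\le\alpha(t)\bigl(1-e^{-B(t)}\bigr)$, hence $G(t)\le\alpha(t)\bigl(e^{B(t)}-1\bigr)$. Combining with the hypothesis $L(t)\le\alpha(t)+G(t)$ then gives $L(t)\le\alpha(t)e^{B(t)}=\alpha(t)\exp\bigl(\int_0^t\beta(s)\,ds\bigr)$, which is the claim.

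I do not anticipate a genuine obstacle, as this is the classical argument; the only step requiring care is the invocation of the monotonicity of $\alpha$, which is precisely what allows the estimate to close with the clean prefactor $\alpha(t)$ rather than an integrated version of $\alpha$. Were $\alpha$ not monotone, the sharpest statement would instead retain $\alpha(s)$ inside the integral.
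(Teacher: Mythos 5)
Your proof is correct and complete: it is the canonical integrating-factor argument, and each step is justified under the stated hypotheses (continuity of $\beta L$ gives $G\in C^1$, nonnegativity of $\beta$ lets you substitute the hypothesis into $G'$, and monotonicity of $\alpha$ together with the nonnegative weight $e^{-B(s)}\beta(s)$ justifies pulling $\alpha(t)$ out of the integral). Note that the paper itself offers no proof of this lemma --- it is stated as the classical Gr\"onwall inequality, with a reference to the literature --- so there is nothing to compare against; your write-up simply supplies the standard argument the authors take for granted. As a side remark, the lemma could also be deduced from the paper's Lemma \ref{Osgood} (Osgood) with $\mu(r)=r$, applied on $[0,t]$ with the constant $\alpha(t)$ in place of $\beta$, but your direct argument is the more elementary and self-contained route.
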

\noindent We now state Osgood's Lemma.
\begin{lemma}\label{Osgood}
     Let $T>0 $, $\rho$ a positive borelian function, $\gamma$ a locally integrable positive function, and $\mu$ a continuous increasing function. Assume that for some number $\beta>0$ and for all $t\in[0,T]$, these functions satisfy $$\rho(t)\leq\beta +\int_0^t\gamma(s)\mu(\rho(s))\,ds.$$
     Then for all $t\in[0,T]$ $$-\phi(\rho(t))+\phi(\beta)\leq \int_0^t \gamma(s)\,ds,$$ where $\phi(x)=\int_x^1\frac{1}{\mu(r)}\,dr$. 
\end{lemma}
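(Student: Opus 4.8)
The plan is to prove Osgood's Lemma by constructing the natural majorant of $\rho$ and reducing the integral inequality to a differential inequality that the function $\phi$ is precisely designed to linearize. Throughout I take $\mu>0$ on $(0,\infty)$, which is implicit in the hypothesis since otherwise $\phi(x)=\int_x^1\frac{1}{\mu(r)}\,dr$ would not be defined; I likewise assume, as the statement tacitly requires for the right-hand side to be meaningful, that $\gamma\,\mu(\rho)\in L^1_{\mathrm{loc}}([0,T])$.

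First I would set
$$R(t):=\beta+\int_0^t\gamma(s)\,\mu(\rho(s))\,ds,$$
and record three facts: $R(0)=\beta$; by hypothesis $\rho(t)\le R(t)$ for every $t\in[0,T]$; and, since $\gamma\,\mu(\rho)$ is locally integrable and nonnegative, $R$ is absolutely continuous and nondecreasing with $R(t)\ge\beta>0$. By the Lebesgue fundamental theorem of calculus, $R'(t)=\gamma(t)\,\mu(\rho(t))$ for almost every $t\in[0,T]$.

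The key step is then to exploit the monotonicity of $\mu$: because $\mu$ is increasing and $\rho(t)\le R(t)$, we have $\mu(\rho(t))\le\mu(R(t))$, and hence almost everywhere
$$R'(t)=\gamma(t)\,\mu(\rho(t))\le\gamma(t)\,\mu(R(t)).$$
Since $\mu(R(t))>0$, I divide through and observe that $\phi'(x)=-\tfrac{1}{\mu(x)}$, so $\phi$ is $C^1$ and strictly decreasing on $(0,\infty)$; as $\phi\circ R$ is the composition of a $C^1$ function with an absolutely continuous function, it is absolutely continuous and the chain rule gives, almost everywhere,
$$\frac{d}{dt}\,\phi(R(t))=-\frac{R'(t)}{\mu(R(t))}\ge-\gamma(t).$$
Integrating from $0$ to $t$ yields $\phi(R(t))-\phi(\beta)\ge-\int_0^t\gamma(s)\,ds$. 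Finally, using that $\phi$ is decreasing together with $\rho(t)\le R(t)$, so that $\phi(\rho(t))\ge\phi(R(t))$, I combine the two bounds to obtain
$$\phi(\rho(t))-\phi(\beta)\ge\phi(R(t))-\phi(\beta)\ge-\int_0^t\gamma(s)\,ds,$$
which rearranges to exactly the claimed inequality $-\phi(\rho(t))+\phi(\beta)\le\int_0^t\gamma(s)\,ds$.

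The proof contains no clever idea beyond introducing the majorant, so the only genuinely delicate points—and where I would be careful—are the regularity bookkeeping and the signs. One must justify that $R$ is absolutely continuous so that $R'=\gamma\,\mu(\rho)$ holds almost everywhere, and that the chain rule is valid for the composition $\phi\circ R$. The other place an error could slip in is tracking the two monotonicities in tandem: $\mu$ increasing is applied to $\rho\le R$ to \emph{bound} $R'$ from above, while $\phi$ decreasing is applied to $\rho\le R$ to pass from $R$ \emph{back} to $\rho$ at the end, and the two resulting sign flips must be kept consistent.
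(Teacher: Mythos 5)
Your proof is correct: the majorant $R$, the monotonicity transfer $\mu(\rho)\le\mu(R)$, the identity $\phi'=-1/\mu$ with the chain rule for $\phi\circ R$, and the final sign flip via $\phi$ decreasing are all handled properly, and the regularity caveats you flag (local integrability of $\gamma\,\mu(\rho)$, absolute continuity of $R$ and of $\phi\circ R$ on the compact range $[\beta,R(T)]\subset(0,\infty)$) are exactly the right ones. The paper itself gives no proof, deferring to \cite{chemin}, Chapter 5, and your argument is essentially the standard one found there, so nothing further is needed.
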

\subsection{Compactness properties for $L^1(\mathbb{R}^2)$.} To prove Theorem 
\ref{Shorttimesec1}, we will construct a sequence of smooth solutions to (\ref{V}) and show that our sequence is uniformly bounded in $W^{2,1}(\mathbb{R}^2)$, which will allow us to pass to a weak limit and obtain a candidate for a weak solution. Some care is needed, however, when passing to the limit, due to non-reflexivity of $L^1(\mathbb{R}^2)$. We therefore need a few results on the strong and weak compactness properties of $L^1(\mathbb{R}^2)$. First, we state a few definitions.  
 \begin{definition}\label{props}
    Let $p\in[1,\infty)$ and let $\mathcal{F}\subset L^p(\mathbb{R}^d)$. We call $\mathcal{F}$ \textit{equitight} in $L^p(\mathbb{R}^d)$ if \begin{equation}
        \tag{T} \lim_{R\to\infty}\sup_{f\in\mathcal{F}}\int_{|x|\geq R}|f(x)|^p\, dx=0.
    \end{equation} 
    $\mathcal{F}$ is called \textit{equicontinuous} in $L^p(\mathbb{R}^d)$ if \begin{equation}
        \tag{C}\lim_{h\to0}\sup_{f\in\mathcal{F}}\int_{\mathbb{R}^d}|f(x)-f(x+h)|^p \, dx=0.
    \end{equation}
    $\mathcal{F}$ is called \textit{equiintegrable} in $L^p(\mathbb{R}^d)$ if \begin{equation}
        \tag{I} \lim_{\delta\to0}\sup_{|A|<\delta}\sup_{f\in\mathcal{F}}\int_{A}|f(x)|^p \, dx=0.
    \end{equation}
\end{definition}
\noindent The properties in Definition \ref{props} allow us to neatly characterize weakly and strongly precompact sets in $L^1(\mathbb{R}^d)$. 
The following result is the \textit{Dunford-Pettis} Theorem \cite{royden}.

\begin{theorem}\label{DP}
    A collection $\mathcal{F}\subset L^1(\mathbb{R}^d)$ is precompact in the weak topology if and only if $\mathcal{F}$ is equiintegrable and equitight in $L^1(\mathbb{R}^d)$
\end{theorem}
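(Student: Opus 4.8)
Since weak precompactness coincides with weak \emph{sequential} precompactness by the Eberlein--\v{S}mulian theorem, the plan is to phrase everything in terms of sequences. I would first record that $(\mathrm{T})$ and $(\mathrm{I})$ together already force $\mathcal F$ to be bounded in $L^1(\mathbb R^d)$: choosing $R$ so that $\int_{|x|\ge R}|f|\le 1$ uniformly, and then covering $B_R$ by finitely many sets of measure below the threshold $\delta$ supplied by $(\mathrm{I})$, bounds $\int_{B_R}|f|$ uniformly. With boundedness in hand I would prove the two implications separately.

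For the substantive ($\Leftarrow$) direction I would assume $(\mathrm{I})$ and $(\mathrm{T})$, take a sequence $(f_n)\subset\mathcal F$, and extract a weakly convergent subsequence by truncation and reflexivity. Writing $f_n^M$ for the truncation of $f_n$ at height $M$, equiintegrability together with the $L^1$-bound gives $\sup_n\|f_n-f_n^M\|_{L^1}\to 0$ as $M\to\infty$, since $|\{|f_n|>M\}|\le \|f_n\|_{L^1}/M$ is uniformly small. For fixed integers $R,M$ the functions $f_n^M\mathbf 1_{B_R}$ are bounded in the Hilbert space $L^2(B_R)$, so a subsequence converges weakly there; a diagonal argument over $R,M\in\mathbb N$ yields a single subsequence $(f_{n_k})$ along which $f_{n_k}^M\mathbf 1_{B_R}$ converges weakly in $L^2(B_R)$ for all $R,M$. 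Testing against an arbitrary $\varphi\in L^\infty=(L^1)^*$ and splitting $\int f_{n_k}\varphi$ into the contribution on $B_R$, the truncation error, and the tail $\int_{|x|\ge R}f_{n_k}\varphi$, the last two terms are uniformly small in $k$ by $(\mathrm{T})$ and the truncation estimate (each bounded by $\|\varphi\|_\infty$ times a quantity tending to $0$), while the first converges in $k$ by the $L^2$ weak convergence. Hence $(\int f_{n_k}\varphi)_k$ is Cauchy and defines a bounded linear functional $L$ on $L^\infty$.

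The remaining task, which I expect to be the main obstacle, is to represent $L$ by an element of $L^1$, i.e. to rule out a singular part in $L$ viewed as an element of $(L^\infty)^*$. I would set $\nu(A)=L(\mathbf 1_A)$, a set function satisfying $|\nu(A)|\le\sup_n\int_A|f_n|$ and total variation $|\nu|(\mathbb R^d)\le\sup_n\|f_n\|_{L^1}<\infty$. Here $(\mathrm{I})$ makes $\nu$ absolutely continuous with respect to Lebesgue measure, while $(\mathrm{T})$ controls its mass at infinity; together they give countable additivity, so Radon--Nikodym produces $f\in L^1(\mathbb R^d)$ with $\nu(A)=\int_A f$. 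Linearity and uniform approximation of bounded measurable functions by simple functions then upgrade this to $L(\varphi)=\int f\varphi$ for all $\varphi\in L^\infty$, i.e. $f_{n_k}\rightharpoonup f$, establishing precompactness. It is precisely this identification that makes $(\mathrm{I})$ and $(\mathrm{T})$ indispensable rather than merely sufficient.

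For the converse ($\Rightarrow$), I would assume $\mathcal F$ weakly precompact and argue by contradiction: a failure of $(\mathrm{I})$ or $(\mathrm{T})$ produces $\varepsilon_0>0$ and $f_n\in\mathcal F$ with either sets $A_n$, $|A_n|\to 0$, $\int_{A_n}|f_n|\ge\varepsilon_0$, or radii $R_n\to\infty$, $\int_{|x|\ge R_n}|f_n|\ge\varepsilon_0$. Passing to a weak limit $f_{n_k}\rightharpoonup f$ and testing against indicators shows that the finite signed measures $\nu_k(A)=\int_A f_{n_k}$, each absolutely continuous with respect to Lebesgue measure, converge setwise on every measurable set; the Vitali--Hahn--Saks theorem then yields uniform absolute continuity and uniform countable additivity of the family $\{\nu_k\}$. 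A Hahn-decomposition step upgrades these statements from $\nu_k$ to the total variations $|\nu_k|$, contradicting both failure scenarios and completing the proof.
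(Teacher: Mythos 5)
Your proposal addresses a statement the paper never actually proves: Theorem \ref{DP} is quoted as the classical Dunford--Pettis theorem, cited to \cite{royden}, and used as a black box in Section \ref{ProofOfMainTheorem}. So there is no in-paper argument to compare against; what can be judged is the internal soundness of your sketch, which is essentially the standard textbook proof and is correct in outline. The sufficiency direction (Eberlein--\v{S}mulian reduction to sequences, the preliminary $L^1$-bound from (T) plus (I), truncation at height $M$ with the Chebyshev-plus-equiintegrability control of the truncation error, weak $L^2(B_R)$ compactness with a diagonal extraction, and then Radon--Nikodym applied to the set function $\nu(A)=L(\mathbf{1}_A)$) is a complete and correct plan; in particular you correctly identify that (I) gives countable additivity of $\nu$ inside a fixed ball while (T) controls the mass escaping to infinity, and the upgrade from indicators to all of $L^\infty$ by uniform approximation with simple functions is legitimate.

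The one step you should make explicit is in the necessity direction. The Vitali--Hahn--Saks theorem is usually stated over a \emph{finite} measure space, whereas Lebesgue measure on $\mathbb{R}^d$ is only $\sigma$-finite, and the tail sets $\{|x|\geq R_n\}$ in your second failure scenario have \emph{infinite} Lebesgue measure; consequently ``uniform absolute continuity with respect to Lebesgue measure'' alone cannot rule out escape of mass to infinity. Two standard fixes, either of which completes your argument: (i) invoke the Nikodym convergence theorem to get uniform countable additivity of $\{\nu_k\}$ (which you do name) and test it on the decreasing sequence $\{|x|\geq j\}\downarrow\emptyset$; or (ii) replace Lebesgue measure by an equivalent finite measure $w\,dx$ with, say, $w(x)=(1+|x|)^{-d-1}$, for which both $|A_n|\to 0$ and $R_n\to\infty$ force the $w\,dx$-measures of the offending sets to vanish, and apply Vitali--Hahn--Saks relative to $w\,dx$. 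Your Hahn-decomposition upgrade from $\nu_k$ to $|\nu_k|$ is exactly right and is needed in both scenarios, since the failure hypotheses concern $\int_A|f_n|$ rather than $\int_A f_n$. With that point spelled out, the proof is complete.
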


For strong compactness in $L^1(\mathbb{R}^d)$, we have the \textit{Fr\'echet-Kolmogorov-Riesz} theorem, see \cite{Brezis}.

\begin{theorem}\label{RFK}
    A collection $\mathcal{F}\subset L^1(\mathbb{R}^d)$ is precompact in the strong topology if and only if $\mathcal{F}$ is equicontinuous and equitight in $L^1(\mathbb{R}^d)$. 
\end{theorem}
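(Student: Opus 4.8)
My plan is to treat the stated equivalence as two separate implications, using total boundedness (the existence, for each $\epsilon>0$, of a finite $\epsilon$-net) as the working definition of precompactness in the metric space $L^1(\mathbb{R}^d)$.

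For the forward direction (precompactness $\Rightarrow$ (C) and (T)), I would fix $\epsilon>0$, choose a finite $\epsilon$-net $f_1,\dots,f_N$ for $\mathcal F$, and reduce both conditions to this finite collection. For (T), I would use that $f\mapsto \int_{|x|\ge R}|f|$ is $1$-Lipschitz on $L^1$, together with $\int_{|x|\ge R}|f_i|\to 0$ as $R\to\infty$ (dominated convergence) for each fixed $f_i$; choosing $R$ large for the finitely many $f_i$ and approximating an arbitrary $f$ by its nearest $f_i$ gives a uniform tail bound. For (C), I would invoke continuity of translation in $L^1$, i.e. $\|\tau_h f_i - f_i\|_1\to 0$ as $h\to 0$ (a consequence of the density of $C_c$), the isometry $\|\tau_h g\|_1=\|g\|_1$, and the triangle inequality $\|\tau_h f - f\|_1 \le 2\|f-f_i\|_1 + \|\tau_h f_i - f_i\|_1$ to transfer equicontinuity from the finite net to all of $\mathcal F$.

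The substantive direction is (C) and (T) $\Rightarrow$ precompactness, and here my plan is the classical mollify-and-truncate scheme. Given $\epsilon>0$: by (T) I would pick $R$ so that the mass of every $f$ outside $B_R$ is $<\epsilon$; by (C) I would pick a standard mollifier $\rho_\delta$ with $\|\rho_\delta * f - f\|_1 < \epsilon$ uniformly in $f\in\mathcal F$, the key estimate being $\|\rho_\delta * f - f\|_1 \le \int \rho_\delta(h)\,\|\tau_h f - f\|_1\, dh \le \sup_{|h|\le\delta}\|\tau_h f - f\|_1$. It then suffices to produce a finite net for the truncated mollified family $\{(\rho_\delta * f)|_{\overline{B_R}}\}$ in $L^1(B_R)$. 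This family is uniformly bounded, $\|\rho_\delta * f\|_\infty \le \|\rho_\delta\|_\infty\|f\|_1$, and uniformly Lipschitz, $\|\nabla(\rho_\delta * f)\|_\infty \le \|\nabla\rho_\delta\|_\infty \|f\|_1$, so by Arzel\`a--Ascoli it is precompact in $C(\overline{B_R})$ and hence, since $B_R$ has finite measure, in $L^1(B_R)$. Combining a finite net for this family with the two uniform $\epsilon$-approximations yields a finite $O(\epsilon)$-net for $\mathcal F$ in $L^1(\mathbb{R}^d)$, proving total boundedness.

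The hard part, and the step I would flag, is that the Arzel\`a--Ascoli argument requires a uniform $L^1$ bound on $\mathcal F$, which is not listed among the hypotheses and must be extracted from (C) and (T). I would obtain it by a telescoping argument: (C) gives $\sup_{|h|\le\delta}\|\tau_h f - f\|_1 \le 1$ for some $\delta$, whence $\|\tau_v f - f\|_1 \le (1+|v|/\delta)$ for every $v$; choosing $v$ with $|v|=3R$, where $R$ is the tightness radius from (T), moves the mass of $f$ inside $B_R$ out into the region $\{|x|>R\}$, so that applying this to $|f|$ yields $\int_{B_R}|f| \le \|\tau_v f - f\|_1 + \int_{|x|>R}|f| \lesssim 1 + R/\delta$ uniformly in $f$; adding the tail bound from (T) gives $\sup_f\|f\|_1<\infty$. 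Conceptually this is forced, since a precompact set in a normed space is automatically bounded, so for the equivalence to hold the two conditions must already imply boundedness. With boundedness in hand, the remaining estimates are routine.
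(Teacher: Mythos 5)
Your proof is correct, but there is nothing in the paper to compare it against: the paper states Theorem \ref{RFK} as the classical Fr\'echet--Kolmogorov--Riesz theorem and simply cites \cite{Brezis} for it, giving no proof of its own. What you have written is the standard self-contained argument: finite $\epsilon$-nets plus translation invariance of the norm for the easy direction, and the mollify-and-truncate scheme with Arzel\`a--Ascoli for the substantive one. The point you flagged is indeed the only delicate issue, and you resolved it correctly: the statement as written (and as used in Section \ref{ProofOfMainTheorem}) carries no boundedness hypothesis, whereas the textbook version in \cite{Brezis} assumes the family is bounded in $L^p$, so any proof of the statement as given must derive $\sup_{f\in\mathcal{F}}\|f\|_{L^1}<\infty$ from (C) and (T). Your telescoping bound $\|\tau_v f-f\|_{L^1}\le 1+|v|/\delta$, applied with $|v|$ comparable to the tightness radius so that the ball $B_R$ is translated into the region $\{|x|>R\}$ where (T) applies, does exactly this; it is the same observation that underlies the ``improved'' form of the Kolmogorov--Riesz theorem in the literature. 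The remaining estimates are sound: $f\mapsto\int_{|x|\ge R}|f|$ is $1$-Lipschitz on $L^1$; the inequality $\|\rho_\delta*f-f\|_{L^1}\le\sup_{|h|\le\delta}\|\tau_h f-f\|_{L^1}$ follows from Fubini and $\int\rho_\delta=1$; and precompactness in $C(\overline{B_R})$ passes to $L^1(B_R)$ because $|B_R|<\infty$. The one detail left implicit is that the truncation error $\int_{|x|>R}|\rho_\delta*f|$ is controlled by the tail of $f$ at radius $R-\delta$ rather than $R$, so (T) should be invoked at a slightly enlarged radius; this is cosmetic and does not affect the argument.
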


\subsection{Lorentz spaces and velocity bounds} Lorentz spaces will provide us with the optimal embeddings to ensure a bound on the velocity gradient and close the inequality $(\ref{criticalinequality})$. To define these spaces, given $f:\mathbb{R}^d\to\mathbb{R}$, we first define the {\em distribution function} $$d_f(\alpha)=\big|\{x\in\mathbb{R}^d:|f(x)|> \alpha\}|$$ and the \textit{decreasing rearrangement} of $f$, $f^*:[0,\infty)\to[0,\infty]$, $$f^*(t)=\inf\{s>0:d_f(s)\leq t\}.$$
\begin{definition}
For $p\in[1,\infty)$ and $q\in[1,\infty]$, define the {\em Lorentz space} $L^{(p,q)}(\mathbb{R}^d)$ to be the space of measurable functions (identified when differing only on a set of Lebesgue measure zero) satisfying $\|f\|_{L^{(p,q)}}<\infty$, where 
$$\|f\|_{L^{(p,q)}}=\begin{cases}
    \bigg(\displaystyle\int_0^\infty \left(t^{\frac{1}{p}}f^*(t)\right)^q\frac{dt}{t}\bigg)^{\frac{1}{q}}\quad& \text{if }q<\infty,
    \\ \left(\sup_{t>0} t^pf^*(t)\right)^{\frac{1}{p}} \quad &\text{if }q=\infty.
\end{cases}$$
Under this norm, $L^{(p,q)}(\mathbb{R}^d)$ is complete. The Lorentz spaces satisfy the embedding 
\begin{equation*}
L^{(p,q)}\hookrightarrow L^{(p,r)}
\end{equation*}
for $q<r$. Moreover, $L^{(p,p)}=L^p$ for all $p\in[1,\infty)$, so that
\begin{equation}\label{LorentzLebesgue}
L^{(p,q)}\hookrightarrow L^{p}
\end{equation}
for all $q<p$.
\end{definition}
We remark that Lorentz spaces satisfy sharper embeddings than those given by the classical Sobolev embedding theorem. To see this, note that for all $f\in W^{1,1}(\mathbb{R}^d)$,
\begin{equation}\label{sobolevlorentz}
\|f\|_{L^{\left(\frac{d}{d-1},1\right)}}\leq C\|\nabla f\|_{L^1},
\end{equation}
where $C>0$ is a dimensional constant independent of $f$ (see \cite{tartar} for a proof of (\ref{sobolevlorentz})). This inequality implies the continuous embedding $$W^{1,1}(\mathbb{R}^2)\hookrightarrow L^{(2,1)}(\mathbb{R}^2),$$ which, in view of (\ref{LorentzLebesgue}), is sharper than the classical Sobolev embedding $W^{1,1}(\mathbb{R}^2)\hookrightarrow L^{2}(\mathbb{R}^2)$. 

Lorentz spaces also satisfy a duality relation that allows us to extend the standard H\"older inequality. We have, for $p\in(1,\infty)$ and $q\in[1,\infty)$, the relation $$\Big(L^{(p,q)}(\mathbb{R}^d)\Big)^*=L^{(p^*,q^*)}(\mathbb{R}^d), \quad \frac{1}{p}+\frac{1}{p^*}=\frac{1}{q}+\frac{1}{q^*}=1,$$ where $q^*=\infty$ if $q=1$. In particular, we obtain the H\"older-Lorentz inequality 
\begin{equation}\label{holderlorentz}
    \int_{\mathbb{R}^d}f(x)g(x)\, dx\leq \|f\|_{L^{(2,1)}}\|g\|_{L^{(2,\infty)}},
\end{equation}
for $f\in L^{(2,1)}(\mathbb{R}^d)$ and $g\in L^{(2,\infty)}(\mathbb{R}^d).$

We are now in a position to prove the key bound on the $L^{\infty}$-norm of the velocity gradient when vorticity is in $W^{2,1}(\mathbb{R}^2)$.  This bound will allow us to close the inequality (\ref{criticalinequality}) when proving the a priori estimate in Section \ref{section3}.
\begin{lemma}\label{velgradbound}
Assume that $u$ is a smooth, divergence free vector field on $\mathbb{R}^2$ with vorticity $\omega$ in $W^{2,1}(\mathbb{R}^2)$. Then 
\begin{equation*}
\| \nabla u \|_{L^{\infty}} \leq C\| \omega \|_{W^{2,1}}.
\end{equation*}
\end{lemma}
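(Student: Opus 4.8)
The plan is to exploit the fact that the Biot--Savart kernel $K(x)=\frac{1}{2\pi}\frac{x^\perp}{|x|^2}$ is locally integrable, so that one may differentiate under the convolution, while simultaneously lying in the weak Lebesgue space $L^{(2,\infty)}(\mathbb{R}^2)$, and to pair it against the extra regularity of $\omega$ through the endpoint H\"older--Lorentz inequality (\ref{holderlorentz}). The key point is that the naive representation of $\nabla u$ as a singular integral of $\omega$ fails to be $L^\infty$-bounded; instead I would move one derivative onto the vorticity, turning $\nabla u$ into the (non-singular) convolution of $K$ with $\nabla\omega$, and then use that $\nabla\omega\in W^{1,1}$ embeds into $L^{(2,1)}$, which is precisely the predual needed to pair with $K\in L^{(2,\infty)}$.

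First I would record that $u=K*\omega$ and that, since $K\in L^1_{\mathrm{loc}}(\mathbb{R}^2)$, for each pair of indices $i,j$ one may differentiate under the integral sign to write
\[
\partial_j u_i(x) = (K_i * \partial_j\omega)(x) = \int_{\mathbb{R}^2} K_i(x-y)\,\partial_j\omega(y)\,dy,
\]
the integral converging absolutely by the bound obtained below. This is the crucial algebraic step: rather than placing the derivative on the non-integrable kernel $\nabla K$, I place it on $\omega$, which is legitimate because $\omega\in W^{2,1}$ forces $\partial_j\omega\in W^{1,1}$ and, together with the smoothness of $u$, supplies enough decay and regularity to justify the differentiation.

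Next I would observe that $|K_i(x)|\le \frac{1}{2\pi|x|}$ and that $x\mapsto |x|^{-1}$ lies in $L^{(2,\infty)}(\mathbb{R}^2)$: its distribution function is $d(\alpha)=\pi\alpha^{-2}$, whence $\sup_{t>0}t^{1/2}f^*(t)<\infty$. Since Lorentz norms are invariant under translation and reflection, $\|K_i(x-\cdot)\|_{L^{(2,\infty)}}=\|K_i\|_{L^{(2,\infty)}}\le C$ uniformly in $x$. Applying the H\"older--Lorentz inequality (\ref{holderlorentz}) then yields, for every $x$,
\[
|\partial_j u_i(x)| \le \|K_i\|_{L^{(2,\infty)}}\,\|\partial_j\omega\|_{L^{(2,1)}} \le C\,\|\partial_j\omega\|_{L^{(2,1)}}.
\]

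Finally I would apply the sharp Sobolev--Lorentz embedding (\ref{sobolevlorentz}) to $\partial_j\omega\in W^{1,1}(\mathbb{R}^2)$, giving $\|\partial_j\omega\|_{L^{(2,1)}}\le C\|\nabla\partial_j\omega\|_{L^1}\le C\|\omega\|_{W^{2,1}}$; taking the supremum over $x$ and over the finitely many indices $i,j$ completes the argument. The main obstacle, and the reason the critical exponent is delicate, is that one cannot afford the classical embedding $W^{1,1}\hookrightarrow L^2$ paired with an $L^2$ kernel, since $K\notin L^2$; it is exactly the endpoint refinement $L^{(2,1)}\times L^{(2,\infty)}$ that just barely closes. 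The care therefore lies in verifying that $K$ genuinely lands in weak-$L^2$ and that transferring the derivative onto $\omega$ is justified.
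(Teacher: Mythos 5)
Your proof is correct and follows essentially the same route as the paper: both represent $\nabla u$ with the derivative transferred onto the vorticity and then close the estimate by pairing $K\in L^{(2,\infty)}$ against $\nabla\omega\in L^{(2,1)}$ via the H\"older--Lorentz inequality (\ref{holderlorentz}) together with the embedding (\ref{sobolevlorentz}). The only difference is that the paper invokes the identity $\nabla u = K*\nabla\omega + c\,\omega I$ and therefore needs the additional step $\|\omega\|_{\infty}\leq \|\partial_1\partial_2\omega\|_{L^1}\leq \|\omega\|_{W^{2,1}}$, whereas your representation $\partial_j u_i = K_i*\partial_j\omega$ carries no zeroth-order term, so that step is unnecessary.
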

\begin{proof}
From the Biot-Savart law, we have the identity $$\nabla u=K*\nabla\omega +c\omega I$$ for some constant $c>0$ and for $I$ the $2\times 2$ identity matrix (see \cite{majda}). Because the Lorentz norms are rearrangement invariant, we have 
\begin{equation}\label{nablabound}
\begin{split}
    &|\nabla u(x)| \leq \int_{\mathbb{R}^2} \frac{1}{2\pi |x-y|}\big|\nabla\omega(y)\big|\, dy+c|\omega(x)| \\
    &\qquad \leq \left\|\frac{1}{2\pi |\cdot|}\right\|_{L^{(2,\infty)}}\|\nabla \omega\|_{L^{(2,1)}}+c\|\omega\|_\infty
    \\ &\qquad \leq C\|\nabla \omega\|_{W^{1,1}}+c\|\omega\|_\infty
    \leq C\big(\|\omega\|_{W^{2,1}}+\|\omega\|_\infty\big),
    \end{split}
\end{equation} where we applied (\ref{holderlorentz}) to obtain the second inequality and (\ref{sobolevlorentz}) to get the third inequality. Now observe that for smooth and compactly supported $f\in W^{2,1}(\mathbb{R}^2)$, one has $$f(x_0,y_0)=\int_{-\infty}^{y_0}\int_{-\infty}^{x_0}\partial_{1}\partial_2f(x,y)\, dx\, dy,$$
and hence $\|f\|_\infty\leq \|\partial_{1}\partial_2f\|_{L^1}$. The general case follows by approximation using the density of $C^\infty_0(\mathbb{R}^2)$ in $W^{2,1}(\mathbb{R}^2)$. We conclude that
$$\|\omega\|_\infty\leq \|\omega\|_{W^{2,1}}.$$
Substituting this estimate into (\ref{nablabound}) gives 
\begin{equation*}
    \|\nabla u\|_\infty\leq C\|\omega\|_{W^{2,1}}.
\end{equation*}
\end{proof}
\begin{remark}\label{Bourgain}
Lemma \ref{velgradbound} also follows from techniques used to prove Theorem 3 in \cite{BB}.
\end{remark}
\begin{remark}\label{dini}
Note that the estimate $\|f\|_\infty\leq \|\partial_{1}\partial_2f\|_{L^1}$ implies that $W^{2,1}(\mathbb{R}^2)$ functions can be uniformly approximated by $C^\infty_0(\mathbb{R}^2)$ functions, showing that they are, in fact, uniformly continuous. 
\end{remark}

\Obsolete{\subsection{Littlewood-Paley operators and Besov spaces} 
 \noindent We now list the main tools used in this paper to deal with (\ref{bsqg}), which are the Littlewood-Paley operators and the Besov space. To define these, we begin by letting $\rchi,\varphi\in \mathcal{S}(\mathbb{R}^2)$ be two radial functions such that supp $\rchi\subset\{\xi\in\mathbb{R}^2:|\xi|\leq\frac{4}{3}\}$, supp $\varphi \subset\{\xi\in\mathbb{R}^2:\frac{3}{4}\leq |\xi|\leq \frac{8}{3}\}$ and $\rchi+\sum_{j=0}^\infty\varphi(2^{-j}\cdot)\equiv 1$. It is classical that such functions exist, see for example \cite{chemin}. For ease of notation, let $\varphi_j$ denote the function $\varphi(2^{-j}\cdot)$. Note supp $\varphi_j\cap$ supp $\varphi_k=\emptyset$ if $|j-k|\geq 2$. 
\\ \\ 
Let $f\in\mathcal{S}'(\mathbb{R}^2)$. Define the Littlewood-Paley operators $S_j$ and $\Delta_j$ for $j\in\mathbb{Z}$ by $$\Delta_j f=\begin{cases}
    0\quad & j<-1,
    \\ \rchi(D)f=(\mathcal{F}^{-1}\rchi)*f &j=-1,
    \\ \varphi_j(D)=(\mathcal{F}^{-1}\varphi_j)*f&j>-1,
\end{cases}$$ and 
$$S_jf=\sum_{k=-\infty}^{j-1}\Delta_kf=\rchi(2^{-j}D)f.$$
With these operators, we define the Besov spaces as follows. 
\begin{definition}\label{Besov}
    Let $s\in\mathbb{R}$ and $p,q\in[1,\infty]$. The Besov space $B^s_{p,q}(\mathbb{R}^2)$ is the space of all tempered distributions $f\in\mathcal{S}'(\mathbb{R}^2)$ for which 
    $$\|f\|_{B^s_{p,q}}:=
\begin{cases}
\Big(\sum_{j=-1}^\infty 2^{jqs}\|\Delta_j\|_{L^p}^q\Big)^{1/q},\quad &q<\infty
    \\ \sup_{j\geq-1}2^{js}\|f\|_{L^p}\quad &q=\infty
    \end{cases}$$
is finite.
\end{definition} 
\noindent Besov spaces are remarkably useful when working in Sobolev or H\"older spaces, for example, because many of these latter spaces can actually be realized as Besov spaces. One of these identifications we'll take advantage of here, which can be found in \cite{triebel}, is 
\begin{equation}\label{sobolevbesovequal}
 W^{s,p}(\mathbb{R}^2)=B^s_{p,p}(\mathbb{R}^2),\quad s\in\mathbb{R},1<p<\infty\text{ or }s\in\mathbb{R}\backslash\mathbb{Z},p=1,   
\end{equation}
with the equivalence of norms.
Notice we only get the strict inclusion $B^k_{1,1}\hookrightarrow W^{k,1}$ for $k\in\mathbb{Z}$, which is why our methods for 2D Euler, where the space we are concerned with is $W^{2,1}$, mostly avoids Besov spaces, while our method for ($\beta)$-SQG), where the critical case is $W^{2+2\beta,1}$, relies on it.
\\
\\ Another important fact is that the Besov spaces possess the Fatou property (see \cite{bcd}, Theorem 2.72). That is, for a bounded sequence $(f_n)\subset B^s_{p,q}(\mathbb{R}^2)$ with $s\geq 0,(p,q)\in[1,\infty]^2$, there exists $f\in B^s_{p,q}(\mathbb{R}^2)$ and a subsequence $(f_{n_k})$ such that $$\lim_{k\to\infty} f_{n_k}=f\quad \text{in }S'(\mathbb{R}^2),\quad \quad \|f\|_{B^s_{p,q}}\leq C\liminf_{k\to\infty}\|f_{n_k}\|_{B^s_{p,q}}.$$ 
This is just the kind of compactness we need for our proof of existence in section \ref{section5}. } 
\subsection{Weak Solution to (\ref{V})}
In what follows, we use the following definition of a weak solution to (\ref{V}). Global existence of such solutions is due to Yudovich \cite{Yudovich}.
\begin{definition}\label{Weakvorticity}
    Let $T>0$. A pair $(\omega,u)$ is called a {\em weak solution} to the vorticity equation (\ref{V}) on $[0,T]$ with initial data $\omega_0$ if $u$ and $\omega$ satisfy:
    \begin{enumerate}
        \item $\omega\in L^\infty([0,T];L^1\cap L^\infty(\mathbb{R}^2))$,
        \medskip
        \item $u=K*\omega$ on $[0,T]\times\mathbb{R}^2$, where $K$ is the Biot-Savart kernel, and 
        \medskip
        \item for all $\varphi\in C^1_0([0,T]\times\mathbb{R}^2)$ the following identity holds:
        $$\int_{\mathbb{R}^2}\varphi(T)\omega(T)-\varphi(0)\omega_0\,dx=\int_0^T\int_{\mathbb{R}^2}\big(\partial_t\varphi+u\cdot\nabla\varphi\big)\omega \,dx\,dt.$$
    \end{enumerate}
\end{definition} 
\section{A Priori Estimate}\label{section3}
To prove Theorem \ref{Shorttimesec1}, we must first establish a priori control of the $W^{2,1}$-norm of smooth solutions to (\ref{V}) for short time. We have the following proposition.
 \begin{prop}\label{Eulerapriori}
 Assume $\omega$ is a smooth solution to (\ref{V}) with initial data $\omega_0\in W^{2,1}(\mathbb{R}^2)$. Then there exists a constant $C>0$ independent of $\omega_0$ and a time $T>0$ such that for all $t\in [0,T]$, $$\|\omega(t)\|_{W^{2,1}}\leq\frac{\|\omega_0\|_{W^{2,1}}}{1-Ct\|\omega_0\|_{W^{2,1}}}.$$
\end{prop}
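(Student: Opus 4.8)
The plan is to differentiate the $W^{2,1}$-norm of $\omega$ in time, bound the right-hand side by $C\|\nabla u\|_\infty\|\omega\|_{W^{2,1}}$, and then invoke Lemma \ref{velgradbound} to replace $\|\nabla u\|_\infty$ by $C\|\omega\|_{W^{2,1}}$, closing the estimate into a nonlinear (Riccati-type) differential inequality whose solution is exactly the stated bound. First I would note that since $\omega$ is smooth and transported by the divergence-free flow generated by $u$, the $L^1$-norm is conserved: $\|\omega(t)\|_{L^1}=\|\omega_0\|_{L^1}$. Then I would estimate the derivatives: applying $D^\sigma$ for $|\sigma|=1,2$ to the transport equation $\partial_t\omega+u\cdot\nabla\omega=0$ produces, by the Leibniz rule, $\partial_t D^\sigma\omega+u\cdot\nabla D^\sigma\omega=-\sum_{0<\tau\le\sigma}\binom{\sigma}{\tau}D^\tau u\cdot\nabla D^{\sigma-\tau}\omega$. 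For each multi-index the top-order commutator terms involve at most two derivatives of $\omega$ paired with at most two derivatives of $u$; crucially, the worst term $D^2 u\cdot\nabla\omega$ carries two derivatives on $u$, which one rewrites via $\nabla u = K*\nabla\omega + c\omega I$ so that $D^2 u$ is a singular integral of $\nabla^2\omega$.

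The key mechanism is the cancellation in the top-order term: testing $\partial_t D^\sigma\omega + u\cdot\nabla D^\sigma\omega = (\text{l.o.t.})$ against $\mathrm{sgn}(D^\sigma\omega)$ and integrating, the transport term $\int u\cdot\nabla D^\sigma\omega\,\mathrm{sgn}(D^\sigma\omega)\,dx = \int u\cdot\nabla|D^\sigma\omega|\,dx$ vanishes after integration by parts because $\nabla\cdot u=0$. This gives $\frac{d}{dt}\|D^\sigma\omega\|_{L^1}\le \sum\|D^\tau u\cdot\nabla D^{\sigma-\tau}\omega\|_{L^1}$. I would then bound each right-hand term by $C\|\nabla u\|_\infty\|\omega\|_{W^{2,1}}$: the terms where only first derivatives of $u$ appear are immediate by Hölder, and the genuinely second-order term is handled by the boundedness of the relevant Calderón–Zygmund-type pairing, or more cleanly by observing that the structure of the equation only ever forces $\|\nabla u\|_\infty$ times a lower count of $\omega$-derivatives. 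Summing over $|\sigma|\le 2$ yields the inequality $(\ref{criticalinequality})$ with $(s,p)=(2,1)$, namely $\frac{d}{dt}\|\omega(t)\|_{W^{2,1}}\le C\|\nabla u(t)\|_\infty\|\omega(t)\|_{W^{2,1}}$.

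Applying Lemma \ref{velgradbound} to close the loop gives $\frac{d}{dt}\|\omega(t)\|_{W^{2,1}}\le C\|\omega(t)\|_{W^{2,1}}^2$, and integrating this Riccati inequality with $y(0)=\|\omega_0\|_{W^{2,1}}$ produces $\|\omega(t)\|_{W^{2,1}}\le \|\omega_0\|_{W^{2,1}}/(1-Ct\|\omega_0\|_{W^{2,1}})$, valid on $[0,T]$ for any $T<(C\|\omega_0\|_{W^{2,1}})^{-1}$, exactly as claimed.

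I expect the main obstacle to be the second-order commutator term $\int D^2 u\cdot\nabla\omega\,\mathrm{sgn}(D^2\omega)\,dx$, where two derivatives land on $u$. Because $D^2 u$ is a Calderón–Zygmund transform of $\nabla^2\omega$ and such operators are \emph{not} bounded on $L^1$, one cannot naively write $\|D^2 u\|_{L^1}\le C\|\nabla^2\omega\|_{L^1}$. The care here is to keep this term paired so that the $L^\infty$-bound $\|\nabla u\|_\infty\le C\|\omega\|_{W^{2,1}}$ from Lemma \ref{velgradbound} always absorbs the derivative count, rather than attempting to estimate a second derivative of $u$ in $L^1$ directly; equivalently, one must verify that the Leibniz expansion never produces a term requiring an $L^1\to L^1$ bound on a singular integral. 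Ensuring this bookkeeping is exactly the delicate point that distinguishes the critical $p=1$ case from the easier $p\in(1,\infty)$ setting.
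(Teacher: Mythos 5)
Your overall architecture (transport structure, conservation of $\|\omega\|_{L^1}$, differentiating the equation, closing via Lemma \ref{velgradbound} into a Riccati-type inequality) matches the paper's, which works in Lagrangian coordinates along the flow map rather than with your $\mathrm{sgn}(D^\sigma\omega)$ testing; that difference is cosmetic, and your Riccati integration is equivalent to the paper's use of Osgood's lemma with $\mu(r)=r^2$. However, there is a genuine gap at precisely the point you flag as the main obstacle: the term $\partial^2_{ij}u\cdot\nabla\omega$ in $L^1$. Your proposed resolution is to ``keep this term paired so that $\|\nabla u\|_\infty$ absorbs the derivative count'' and to ``verify that the Leibniz expansion never produces a term requiring an $L^1\to L^1$ bound on a singular integral.'' The second claim is false as stated: the Leibniz expansion \emph{does} produce $\partial^2_{ij}u\cdot\nabla\omega$, a term with two derivatives on $u$, and no regrouping makes it of the form $\|\nabla u\|_\infty$ times at most two derivatives of $\omega$ in $L^1$. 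The phrase ``boundedness of the relevant Calder\'on--Zygmund-type pairing'' gestures at the right idea but never states it, and without it the differential inequality $\frac{d}{dt}\|\omega\|_{W^{2,1}}\le C\|\nabla u\|_\infty\|\omega\|_{W^{2,1}}$ is not established.

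The device the paper uses is concrete and worth recording: estimate this one term by H\"older with \emph{both} factors in $L^2$, namely $\|\partial^2_{ij}u\cdot\nabla\omega\|_{L^1}\le\|\partial^2_{ij}u\|_{L^2}\|\nabla\omega\|_{L^2}\le C\|\nabla\omega\|_{L^2}^2$, using the $L^2$-boundedness of the Calder\'on--Zygmund operator $\nabla K\ast\cdot$ (so the singular integral is only ever applied on $L^2$, never on $L^1$). At this point the Sobolev embedding $W^{1,1}(\mathbb{R}^2)\hookrightarrow L^2(\mathbb{R}^2)$ gives $\|\nabla\omega\|_{L^2}^2\le C\|\omega\|_{W^{2,1}}^2$, which already suffices for the short-time bound claimed in the proposition. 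The paper additionally refines this by integration by parts,
\begin{equation*}
\int_{\mathbb{R}^2}|\nabla\omega|^2\,dx=-\int_{\mathbb{R}^2}\omega\,\Delta\omega\,dx\le\|\omega\|_\infty\|\omega\|_{W^{2,1}}\le C\|\nabla u\|_\infty\|\omega\|_{W^{2,1}},
\end{equation*}
so that every term is bounded by $\|\nabla u\|_\infty\|\omega\|_{W^{2,1}}$; this sharper form is not needed here but is essential for the global result of Section \ref{section4}. Also, a small bookkeeping slip: since $\nabla u=K\ast\nabla\omega+c\omega I$, the second derivatives $\partial^2_{ij}u$ are singular integrals of $\nabla\omega$ (one derivative of $\omega$), not of $\nabla^2\omega$ as you wrote.
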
 

\noindent \begin{proof}
 Define the particle trajectory map $X:[0,\infty)\times\mathbb{R}^2
\to\mathbb{R}^2$ by
\begin{equation}\label{flowode}
    \frac{dX}{dt}(t,\alpha)=u(t,X(t,\alpha)),\quad X(0,\alpha)=\alpha,
\end{equation}
 where $u=K*\omega$ is the velocity field corresponding to $\omega$. Since $u$ is divergence-free and smooth, $X(t,\cdot)$ is a measure-preserving diffeomorphism of $\mathbb{R}^2$ to itself. Using the chain rule, we may write (\ref{V}) as $$\partial_t\big(\omega(t,X(t,\alpha))\big)=0.$$ Integrating in time yields \begin{equation}\label{characteristics}
\omega(t,X(t,\alpha))=\omega_0(\alpha).
\end{equation} 
Since $X(t,\cdot)$ is measure-preserving, it follows that
\begin{equation}\label{Lpcons}
    \| \omega (t)\|_{L^p}=\|\omega_0\|_{L^p}
\end{equation} 
for all $p\in[1,\infty]$.\\
\\
Differentiating (\ref{V}) in space and using (\ref{flowode}) yields for $i=1,2$,
\begin{align*}
    \partial_t\big(\partial_{i}\omega(t,X(t,\alpha))\big) &=\partial_t\partial_{i}\omega(t,X(t,\alpha))+u(t,X(t,\alpha))\cdot\nabla\partial_{i}\omega(t,X(t,\alpha))
    \\&=-\partial_{i}u(t,X(t,\alpha))\cdot\nabla\omega(t,X(t,\alpha)).
\end{align*} 
We integrate in time, use that $X$ is measure preserving, and apply H\"older's inequality. This gives \begin{equation}\label{onederiv}
\begin{split}
&\|\partial_{i}\omega(t)\|_{L^1}\leq\|\partial_{i}\omega_0\|_{L^1}  +\int_0^t\|\partial_{i}u(s)\cdot\nabla\omega(s)\|_{L^1}\, ds\\
  &\qquad \leq\|\partial_{i}\omega_0\|_{L^1}  +\int_0^t\|\nabla u(s)\|_\infty\|\nabla\omega(s)\|_{L^1} \, ds.
  \end{split}
\end{equation} 
Taking two derivatives in space with respect to the directions $i$ and $j$, where $i,j\in\{1,2\}$, we similarly obtain \begin{equation}\label{twoderivprep} 
\begin{split}
&\|\partial_{ij}^2\omega(t)\|_{L^{1}}\leq\|\partial_{ij}^2\omega_0\|_{L^1}+\int_0^t \|\partial_{ij}^2u(s)\cdot\nabla\omega(s)\|_{L^1} \, ds\\
&+\int_0^t(\|\partial_{j}u(s)\cdot\nabla\partial_{i}\omega(s)\|_{L^1}
+\|\partial_{i}u(s)\cdot\nabla\partial_{j}\omega(s)\|_{L^1}) \,ds.
    \end{split}
\end{equation} 
We estimate the terms under the time integral in (\ref{twoderivprep}).  First, note that
\begin{equation}\label{pretermone}
\begin{split}
&\|\partial_{ij}^2u(s)\cdot\nabla\omega(s)\|_{L^1} \leq \|\partial_{ij}^2u(s)\|_{L^2}\|\nabla\omega(s)\|_{L^2} 
    \\ &\qquad \leq C\|\partial_{j}\omega(s)\|_{L^2}\|\nabla\omega(s)\|_{L^2} \leq C\|\nabla\omega(s)\|_{L^2}^2,    
\end{split}
\end{equation}
where we used H\"older's inequality and the $L^2$-boundedness of the operator $\nabla K*\cdot$. Integration by parts yields \begin{equation}\label{termone}
\begin{split}
&\|\partial_{ij}^2u(s)\cdot\nabla\omega(s)\|_{L^1}\leq C\int_{\mathbb{R}^2}|\nabla\omega(s,x)|^2 \, dx
    = -C\int_{\mathbb{R}^2}\omega(s,x)\Delta\omega(s,x) \, dx
    \\  &\leq C\|\omega(s)\|_{W^{2,1}}\|\omega(s)\|_\infty
    \leq C\|\omega(s)\|_{W^{2,1}}\|\nabla u(s)\|_\infty.
\end{split}
\end{equation}
The remaining terms under the time integral in (\ref{twoderivprep}) can be estimated using H\"older's inequality.  We write  
\begin{equation}\label{termtwo}
\begin{split}
&\|\partial_{i}u(s)\cdot\nabla\partial_{j}\omega(s)\|_{L^1}\leq \|\partial_{i}u(s)\|_\infty \|\nabla\partial_{j}\omega(s)\|_{L^1}
    \\&\qquad\leq \|\nabla u(s)\|_\infty\|\omega(s)\|_{W^{2,1}},
\end{split}
\end{equation}
with the same estimate for the last term. Combining (\ref{twoderivprep}), (\ref{termone}), and (\ref{termtwo}) gives \begin{align}\label{twoderiv}
    \|\partial_{ij}^2 \omega(t)\|_{L^{1}}\leq\|\partial_{ij}^2\omega_0\|_{L^1}+C\int_0^t \|\nabla u(s)\|_\infty\|\omega(s)\|_{W^{2,1}}\, ds.
\end{align}
Summing (\ref{Lpcons}), (\ref{onederiv}), and (\ref{twoderiv}) over $i,j\in \{1,2\}$, we have \begin{equation}\label{globalprep}
    \|\omega(t)\|_{W^{2,1}}\leq\|\omega_0\|_{W^{2,1}}+C\int_0^t\|\nabla u(s)\|_\infty\|\omega(s)\|_{W^{2,1}}ds.
\end{equation}
An application of Lemma \ref{velgradbound} gives \begin{equation*}
\|\omega(t)\|_{W^{2,1}}\leq\|\omega_0\|_{W^{2,1}}+C\int_0^t\|\omega(s)\|_{W^{2,1}}^2\, ds.
\end{equation*} 
\\ Now applying Osgood's Lemma (Lemma \ref{Osgood}) with $\mu(r)=r^2$ gives the desired estimate, 
\begin{equation*}
\|\omega(t)\|_{W^{2,1}}\leq\frac{\|\omega_0\|_{W^{2,1}}}{1-Ct\|\omega_0\|_{W^{2,1}}}.
\end{equation*} To ensure the right hand side is finite, we choose $T<\big(C\|\omega_0\|_{W^{2,1}}\big)^{-1}$. This completes the proof.
\end{proof}
\begin{remark}
The estimate (\ref{pretermone}) above and the Sobolev embedding theorem give
$$\|\partial_{ij}^2u(s)\cdot\nabla\omega(s)\|_{L^1} \leq C\|\nabla\omega(s)\|_{L^2}^2\leq C\|\omega\|^2_{W^{2,1}}.$$
This estimate is sufficient to establish short-time existence of solutions with vorticity in $W^{2,1}(\mathbb{R}^2)$. However, by refining this estimate in (\ref{termone}), we obtain (\ref{globalprep}), which is essential to the proof in Section \ref{section4} of global existence of solutions with vorticity in $W^{2,1}(\mathbb{R}^2)$ under the assumption that the initial vorticity is Dini continuous.   
\end{remark}
\section{Proof of Theorem \ref{Shorttimesec1}}\label{ProofOfMainTheorem}

In this section, we prove Theorem \ref{Shorttimesec1}. Our strategy is to construct an approximating sequence of smooth solutions which exist on a common time interval. We then apply Theorems \ref{DP} and \ref{RFK} to obtain a candidate for $\omega$. We complete the proof by showing that $\omega$ is indeed a weak solution to (\ref{V}), as in Definition \ref{Weakvorticity}.

{\bf The Approximation Sequence and Uniform Bounds.} To begin, let $\rho\in C_0^\infty(\mathbb{R}^2)$ be a nonnegative bump function with $\rho(x)=0$ whenever $|x|>1$ and $\int_{\mathbb{R}^2}\rho=1$. Define $\rho_\varepsilon(x)=\varepsilon^{-2}\rho(x/\varepsilon)$ and $f^\varepsilon= \rho_\varepsilon*f $ for a distribution $f$. We have the following useful facts: \begin{equation}\label{fourfacts}
\begin{split}
    &\quad  \|f^\varepsilon\|_{L^1}\leq \|f\|_{L^1},
    \\&\quad \|f^\varepsilon\|_\infty\leq\|f\|_\infty,
    \\&\quad \partial^\alpha f^\varepsilon=(\partial^\alpha f)^\varepsilon \ \ \text{ for all } \alpha \in \mathbb{Z}_{\geq0}^2,
    \\ &\quad \|f^\varepsilon\|_{W^{k,1}}\leq\|f\|_{W^{k,1}}\ \  \text{ for all } k\in\mathbb{N}. 
\end{split}
\end{equation}
To define our approximating sequence, for each $\varepsilon>0$ we let $\omega^\varepsilon$ denote the global-in-time smooth solution to (\ref{V}) generated from the initial data $\omega_0^\varepsilon$ (noting, however, that it does not necessarily hold $\omega^\varepsilon(t)=\left(\omega(t)\right)^\varepsilon$, where $\omega(t)$ is the Yudovich solution corresponding to $\omega_0$) and let $u^\varepsilon$ be the corresponding velocity obtained by the Biot-Savart law. Let $C>0$ be as in Proposition \ref{Eulerapriori}, and choose $$T<(C\|\omega_0\|_{W^{2,1}})^{-1}.$$  From $(\ref{fourfacts})_4$ and Proposition \ref{Eulerapriori}, it follows that for every $\varepsilon>0$ and every $t\in[0,T]$, 
\begin{align}\label{uniformbound}\|\omega^\varepsilon(t)\|_{W^{2,1}}\leq\frac{\|\omega_0^\varepsilon\|_{W^{2,1}}}{1-Ct\|\omega_0^\varepsilon\|_{W^{2,1}}}\leq\frac{\|\omega_0\|_{W^{2,1}}}{1-CT\|\omega_0\|_{W^{2,1}}}:=M<\infty.\end{align}   
\noindent To obtain a candidate $\omega$ for the weak solution to (\ref{V}), we will apply (\ref{uniformbound}) and Theorems \ref{DP} and \ref{RFK} to pass to the limit as $\varepsilon$ approaches $0$.

In what follows, we will establish strong $W^{1,1}$ convergence and weak $W^{2,1}$ convergence of a sequence of smooth vorticity solutions to (\ref{V}). In view of Theorems \ref{DP} and \ref{RFK}, we must show that, for all $t\in[0,T]$, the families $\{\omega^\varepsilon(t):\varepsilon\in(0,1]\}$ and $\{\partial_i\omega^\varepsilon(t):\varepsilon\in(0,1]\}$, $i=1,2$, are equicontinuous and equitight in $L^1(\mathbb{R}^2)$, and that $\{\partial^2_{ij}\omega^\varepsilon(t):\varepsilon\in(0,1]\}$, $i,j\in\{1,2\}$, is equitight and equiintegrable in $L^1(\mathbb{R}^2)$. Our strategy will make extensive use of the equality $$\omega^\varepsilon(t,x)=\omega_0^\varepsilon(X^{-t}_\varepsilon(x)),$$ which follows from (\ref{characteristics}).  Here $X_\varepsilon^{-t}:\mathbb{R}^2\to\mathbb{R}^2$ is the inverse particle trajectory map defined by $$X_\varepsilon^{-t}(X_\varepsilon(t,\alpha))=\alpha= X_\varepsilon(t,X^{-t}_\varepsilon(\alpha)), \quad t\geq0, \text{ 
 }\alpha\in\mathbb{R}^2.$$ 
The incompressibility of the flow implies $X_\varepsilon^{-t}$ is a volume-preserving homeomorphism for each $t$. Moreover, $X_\varepsilon^{-t}$ satisfies the equalities \begin{align*}
    &X_\varepsilon^{-t}(x)-x=\int_0^t -u^\varepsilon(t-\tau,X_\varepsilon^{\tau-t}(x)) \,d\tau,
    \\ &\partial_iX^{-t}_\varepsilon(x)-e_i=\int_0^t-\nabla u^\varepsilon(t-\tau,X_\varepsilon^{\tau-t}(x))\cdot\partial_iX^{\tau-t}_\varepsilon (x) \, d\tau,
    \\ &\partial_{ij}^2X_\varepsilon^{-t}(x)=\int_0^t\bigg (-\sum_{k=1}^2\partial_j\big(X_\varepsilon^{\tau-t}\big)_k(x)\partial_k\nabla u^\varepsilon(t-\tau,X_\varepsilon^{\tau-t}(x))\cdot\partial_iX_\varepsilon^{\tau-t}(x)
    \\ \notag&\quad \quad \quad\quad\quad\quad\quad \quad \quad-\nabla u^{\varepsilon}(t-\tau,X^{\tau-t}_\varepsilon(x))\cdot\partial_{ij}^2X_\varepsilon^{\tau-t}(x) \bigg)\, d\tau,
\end{align*}
from which we conclude
\begin{equation}
    \label{flowoneboundprep}\|\nabla X_\varepsilon^{-t}\|_\infty \leq 1+\int_0^t\|\nabla u^\varepsilon(t-\tau)\|_\infty\|\nabla X_\varepsilon^{\tau-t}\|_\infty \, d\tau,
\end{equation}
and
\begin{equation}\label{flowtwoboundprep}
\begin{split}
    &\|\nabla^2X_\varepsilon^{-t}\|_{L^2}\leq C\int_0^t\big (\|\nabla^2 u^{\varepsilon}(t-\tau)\|_{L^2}\|\nabla X_\varepsilon^{\tau-t}\|_\infty^2\\
    &\qquad +\|\nabla u^\varepsilon(t-\tau)\|_\infty\|\nabla^2X_\varepsilon^{\tau-t}\|_{L^2} \big)\, d\tau,
    \end{split}
\end{equation}
where $C$ is an absolute constant. Applying Lemma \ref{Gronwall}, Lemma \ref{velgradbound}, and (\ref{uniformbound}) to (\ref{flowoneboundprep}) gives 
\begin{equation}\label{flowonebound}
\|\nabla X_\varepsilon^{-t}\|_\infty\leq e^{\int_0^t\|\nabla u^\varepsilon(t-\tau)\|_\infty d\tau}\leq e^{MT}\end{equation}
for all $t\in[0,T]$. Substituting (\ref{flowonebound}) into (\ref{flowtwoboundprep}) and applying Lemma \ref{Gronwall} again gives 
\begin{equation}\label{flowtwobound}
\begin{split}
&\|\nabla^2X_\varepsilon^{-t}\|_{L^2}\leq C\|\nabla^2u^\varepsilon\|_{L^\infty_tL^2_x}Te^{CMT}e^{C\int_0^t\|\nabla u^\varepsilon(t-\tau)\|_\infty d\tau}\\
&\qquad\qquad  \leq CMTe^{CMT} \leq e^{CMT}.
\end{split}
\end{equation}
To obtain the second-to-last inequality above, we utilized the series of estimates
$$ \|\nabla^2u^\varepsilon(t)\|_{L^2} \leq \|\nabla\omega^{\varepsilon}(t) \|_{L^2} \leq \| \omega^{\varepsilon} (t) \|_{W^{2,1}} \leq M $$
for every $t\in[0,T]$.

With uniform bounds on the sequences of vorticity solutions and corresponding particle trajectory maps, we are now in a position to establish equicontinuity, equitightness, and equiintegrability of the families of smooth solutions and their derivatives.

{\bf Equicontinuity.} We begin by showing equicontinuity of the two families $\{\omega^\varepsilon(t):\varepsilon\in(0,1]\}$ and $\{\partial_i\omega^\varepsilon(t):\varepsilon\in(0,1],i=1,2\}$. First note that for smooth functions $f\in W^{1,1}(\mathbb{R}^2)$, \begin{align*}
    f(x+h)-f(x)=\int_0^{|h|}\nabla f(x+sh/|h|)\cdot h/|h| \,ds.
\end{align*}
Taking an $L^1$ norm gives $$\int_{\mathbb{R}^2}|f(x)-f(x+h)|\, dx\leq\int_0^{|h|}\left \|\nabla f(x+sh/|h|) \right\|_{L^1_x} \, ds=\int_0^{|h|}\|\nabla f\|_{L^1}\, ds=|h|\|\nabla f\|_{L^1},$$ where we used the translation invariance of the $L^1$ norm. It then follows from the uniform bound (\ref{uniformbound}) that $\{\omega^\varepsilon(t):\varepsilon\in(0,1]\}$ and $\{\partial_i\omega^\varepsilon(t):\varepsilon\in(0,1],i=1,2\}$ are equicontinuous in the $L^1$ norm for all $t\in[0,T].$

{\bf Equitightness.} We now establish equitightness of $\{\omega^\varepsilon(t):\varepsilon\in(0,1]\}$, $\{\partial_i\omega^\varepsilon(t):\varepsilon\in(0,1]\}$, and $\{\partial^2_{ij}\omega^\varepsilon(t):\varepsilon\in(0,1]\}$, $i,j\in\{1,2\}$, in $L^1(\mathbb{R}^2)$. First note that $\omega_0\in W^{2,1}(\mathbb{R}^2)$ implies
\begin{equation}\label{decay}\lim_{R\to \infty}\sum_{|\alpha|\leq 2}\int_{|x|\geq R}|\partial^\alpha\omega_0(x)| \, dx=0.\end{equation} 
Moreover, for each $\varepsilon\in(0,1]$ and $\alpha\in \mathbb{Z}_{\geq0}^2$ with $|\alpha|\leq 2$, we have
\begin{equation}\label{largexbound}
\begin{split}
    &\int_{|x|\geq R}|\partial^\alpha \omega_0^\varepsilon(x)| \,dx=\int_{|x|\geq R}|(\partial^\alpha\omega_0)^\varepsilon(x)|\,dx \\
    &\qquad =\int_{|x|\geq R}\left|\int_{\mathbb{R}^2}\rho^\varepsilon(x-y)\partial^\alpha\omega_0(y)\,dy\right|\,dx \\
     &\qquad =\int_{|x|\geq R}\left| \int_{|y|\geq R-1}\rho^\varepsilon(x-y)\partial^\alpha\omega_0(y)\,dy\right|\,dx\\
    &\qquad \leq \int_{|y|\geq R-1}\int_{|x|\geq R} \rho^\varepsilon(x-y)\left|\partial^\alpha\omega_0(y)\right|\,dx\,dy\\
    &\qquad \leq \int_{\mathbb{R}^2}\rho^\varepsilon(x)\,dx\int_{|y|\geq R-1}|\partial^\alpha\omega_0(y)|\,dy,
\end{split}
\end{equation}
where we used that $\text{supp } \rho^\varepsilon \subset B_\varepsilon(0)\subset B_1(0)$, $(\ref{fourfacts})_3$, and Fubini's Theorem. This series of inequalities combined with (\ref{decay}) imply that $\{\partial^\alpha\omega_0^\varepsilon: \varepsilon\in(0,1], |\alpha|\leq 2\}$ is equitight.

In order to obtain equitightness of the families of derivatives at positive times, we must now show that equitightness is preserved under composition with the particle trajectory map.  To achieve this, we utilize (\ref{flowonebound}) and (\ref{flowtwobound}). From the equality $\omega^\varepsilon(t,x)=\omega^\varepsilon_0(X^{-t}_\varepsilon(x))$, we have for all $R\geq 2MT,$ $$\int_{|x|\geq R}|\omega^\varepsilon(t,x)| \,dx\leq \int_{|x|\geq R-2MT}|\omega_0^\varepsilon(x)|\,dx,$$ where we used the finite propagation speed giving $\|u^\varepsilon\|_{L^1_tL^\infty_{x}}\leq2MT$, which follows from the inequalities (see, for example, Proposition 8.2 of \cite{majda}),
\begin{equation}\label{boundedvelocity}
\begin{split}
&\|u^\varepsilon\|_{L^\infty([0,T];L^{\infty}(\mathbb{R}^2))}\leq \|\omega^\varepsilon\|_{L^\infty([0,T];L^{1})}+\|\omega^\varepsilon\|_{L^\infty([0,T];L^{\infty})}\\
&\qquad \qquad \leq 2\|\omega^\varepsilon\|_{L^\infty([0,T]; W^{2,1})} \leq 2M.
\end{split}
\end{equation}
Thus, taking the limit $R\to\infty$, we obtain equitightness of $\{\omega^\varepsilon(t,x):\varepsilon\in(0,1]\}$ by applying (\ref{decay}) and (\ref{largexbound}).

Next, we have for all $R\geq 2MT$ and $i\in\{1,2\}$, 
\begin{equation*}
\begin{split}
&\int_{|x|\geq R}|\partial_i\omega^\varepsilon(t,x)|\,dx = \int_{|x|\geq R}|\nabla\omega^\varepsilon_0(X^{-t}_\varepsilon(x))\cdot \partial_iX_\varepsilon^{-t}(x)|\,dx
\\ &\qquad\qquad\leq \|\nabla X_\varepsilon^{-t}\|_\infty\max_{i=1,2}\int_{|x|\geq R}|\partial_i\omega_0^\varepsilon(X^{-t}_\varepsilon(x))|\,dx
\\&\qquad\qquad \leq e^{MT}\max_{i=1,2}\int_{|x|\geq R-2MT}|\partial_i\omega_0^\varepsilon(x)|\,dx,
\end{split}
\end{equation*}
by (\ref{flowonebound}) and a change of variables. Equitightness of $\{\partial_i\omega^\varepsilon(t):\varepsilon\in(0,1]\}$ follows from passing to the limit as $R\to\infty$ and applying (\ref{decay}) and (\ref{largexbound}).

Finally, consider $i,j\in\{1,2\}$ and let $R\geq 2MT+1$. The chain rule gives 
\begin{equation}\label{chainrule}
\begin{split}
&\partial_{ij}^2\omega^\varepsilon(t,x)=\sum_{k=1}^2\big(\partial_iX_\varepsilon^{-t}\big)_k(x)\partial_k\nabla\omega_0^\varepsilon(X_\varepsilon^{-t}(x))\cdot\partial_jX_\varepsilon^{-t}(x)\\
&\qquad +(\nabla\omega_0^\varepsilon)(X_\varepsilon^{-t}(x))\cdot\partial_{ij}^2\big(X_\varepsilon^{-t}\big)(x),
\end{split}
\end{equation}
so that, by (\ref{flowonebound}), (\ref{flowtwobound}), a change of variables, and H\"older's inequality,
\begin{equation*}
\begin{split}
    &\int_{|x|\geq R}|\partial_{ij}^2\omega^\varepsilon(t,x)|dx \leq C\|\nabla X_\varepsilon^{-t}\|^2_\infty\max_{k=1,2}\int_{|x|\geq R}|\partial_{ik}^2\omega_0^\varepsilon(X_\varepsilon^{-t}(x))| \,dx
    \\&\qquad  +C\|\nabla^2X_\varepsilon^{-t}\|_{L^2}\max_{k=1,2}\Big(\int_{|x|\geq R}|\partial_k\omega_0^\varepsilon(X^{-t}_\varepsilon(x))|^2dx\Big)^{1/2}
    \\& \qquad\leq Ce^{CMT}\max_{k=1,2}\int_{|x|\geq R-2MT}|\partial_{ik}^2\omega_0^\varepsilon(x)|\,dx
    \\ &\qquad +Ce^{CMT}\max_{k=1,2}\Big(\int_{|x|\geq R-2MT}\Big|\int_{|y|\leq1}\partial_k\omega_0(x-y)\rho_\varepsilon(y)\,dy\Big|^2dx\Big)^{1/2}
    \\ & \qquad \leq  Ce^{CMT}\max_{k=1,2}\int_{|x|\geq R-2MT}|\partial_{ik}^2\omega_0^\varepsilon(x)|\,dx
    \\ &\qquad +Ce^{CMT}\max_{k=1,2}\int_{|y|\leq1}\rho^\varepsilon(y)\Big(\int_{|x|\geq R-2MT-1}|\partial_k\omega_0(x)|^2dx\Big)^{1/2}\,dy,
\end{split}
\end{equation*}
where we also used Minkowski's inequality for integrals in the last line. As $R\to\infty$, the first integral of the final expression above vanishes due to the equitightness of the second order derivatives of the mollified initial data. Further, by Sobolev's inequality, $\nabla \omega_0 \in  L^2(\mathbb{R}^2)^2$, the second term in the final expression is finite and also vanishes as $R\to\infty.$ This gives equitightness of $\{\partial_{ij}^2\omega^\varepsilon(t):\varepsilon\in(0,1],i,j\in\{1,2\}\}$.

{\bf Equiintegrability.} We now establish equiintegrability of $\{\partial_{ij}^2\omega^\varepsilon(t):\varepsilon\in(0,1],i,j\in\{1,2\}\}$. Fix $\eta>0$. By the absolute continuity of integration, there exists $\delta>0$ so that for all measurable $A\subset\mathbb{R}^2$ with $|A|<\delta$, one has 
\begin{equation}\label{ei}\int_A |\partial^2_{ij}\omega_0(x)|\, dx<\eta
\end{equation}for all $i,j\in\{1,2\}$. Now note that for each $\varepsilon\in(0,1]$, $i,j\in\{1,2\}$, and $A\subset\mathbb{R}^2$ with $|A|<\delta$, 
\begin{equation}\label{smoothei}
\begin{split}
&\int_A|\partial_{ij}^2\omega_0^\varepsilon(x)|\,dx=\int_A\left|\big(\partial^2_{ij}\omega_0\big)^\varepsilon(x)\right|\,dx\leq\int_A\int_{\mathbb{R}^2}\rho_\varepsilon(y)\left|\partial_{ij}^2\omega_0(x-y)\right|\,dy\,dx
\\ &\qquad =\int_{\mathbb{R}^2}\rho_\varepsilon(y) \left(\int_A|\partial_{ij}^2\omega_0(x-y)|\,dx \right ) \,dy =\int_{A-y}\left|\partial_{ij}^2\omega_0(x)\right|\, dx 
<\eta,\end{split}
\end{equation}
where $A-y:=\{a-y\in\mathbb{R}^2:a\in A\}$, so that $|A-y|=|A|$ for all $y\in\mathbb{R}^2$. The final inequality then follows from (\ref{ei}). We conclude that the second order derivatives of $\omega_0^\varepsilon$ are equiintegrable with respect to $\varepsilon$.

We must now show that equiintegrability is preserved under composition with the particle trajectory map up to time $T$. To achieve this, we again use (\ref{chainrule}) to write,
for $A\subset\mathbb{R}^2$, 
\begin{equation}\label{equi1}   
\begin{split}
&\int_A|\partial_{ij}^2\omega^\varepsilon(t,x)|\,dx \leq C\|\nabla X_\varepsilon^{-t}\|^2_\infty\max_{k=1,2}\int_A|\partial_{ik}^2\omega_0^\varepsilon(X_\varepsilon^{-t}(x))|\,dx
    \\&\qquad+C\|\nabla^2X_\varepsilon^{-t}\|_{L^2}\max_{k=1,2}\left(\int_A|\partial_k\omega_0^\varepsilon(X^{-t}_\varepsilon(x))|^2\,dx\right)^{1/2}.
\end{split}
\end{equation}
Since the inverse particle trajectory maps are measure-preserving, we can make the first term on the right-hand-side of (\ref{equi1}) as small as we would like by choosing a small enough $\delta>0$ with $|A|<\delta$ and using (\ref{smoothei}) and (\ref{flowonebound}). For the integral in the second term on the right hand side, we use Minkowski's inequality for integrals to write \begin{align*}
    &\left(\int_A|\partial_k\omega_0^\varepsilon(X^{-t}_\varepsilon(x))|^2 \,dx\right)^{1/2}=\left(\int_{X^{-t}_\varepsilon(A)}\Big|\int_{\mathbb{R}^2}\partial_k\omega_0(x-y)\rho_\varepsilon(y)\,dy\Big|^2\,dx\right)^{1/2}
    \\&\qquad\qquad \leq\int_{\mathbb{R}^2}\rho_\varepsilon(y) \left(\int_{X^{-t}_\varepsilon(A)-y}|\partial_k\omega_0(x)|^2\,dx\right)^{1/2}\,dy.
    \end{align*} 
Again, we utilize the fact that $X_\varepsilon^{-t}$ is measure-preserving to infer that $|X_\varepsilon^{-t}(A)-y|=|A|$. Thus we can make this integral as small as we would like by choosing $A$ with sufficiently small measure and noting that $|\partial_k\omega_0|^2\in L^1(\mathbb{R}^2)$ for each $k$ by the Sobolev embedding thoerem. We combine this observation with (\ref{flowtwobound}) and our estimate for the first term on the right-hand-side of (\ref{equi1}) to conclude that $\{\partial_{ij}\omega^\varepsilon(t):\varepsilon\in (0,1]\}$ is equiintegrable for each $i,j\in \{1,2\}$.

{\bf Convergence of a Subsequence.} With equicontinuity, equitightness, and equiintegrability in hand, we can now show there exists a sequence $\varepsilon_k\to 0$ such that $\omega^k=\omega^{\varepsilon_k}$ is Cauchy in $L^\infty([0,T];W^{1,1}(\mathbb{R}^2))$. 

First, we use the vorticity equation to write, for all $t\in[0,T]$, 
\begin{equation*}
\begin{split}
\|\partial_t\omega^\varepsilon(t)\|_{W^{1,1}}\leq &\|u^\varepsilon\cdot\nabla\omega^\varepsilon(t)\|_{L^1}+\sum_{i=1}^2\left(\|\partial_iu^\varepsilon\cdot\nabla\omega^\varepsilon(t)\|_{L^1}+\|u^\varepsilon\cdot\nabla\partial_i\omega^\varepsilon(t)\|_{L^1}\right)\\
 \leq & C\|u^\varepsilon(t)\|_{W^{1,\infty}}\|\omega^\varepsilon(t)\|_{W^{2,1}}\leq CM^2,
\end{split}
\end{equation*}
where we used H\"older's inequality, Lemma \ref{velgradbound}, (\ref{uniformbound}), and (\ref{boundedvelocity}). It follows that \begin{equation}\label{liptime}
    \|\omega^\varepsilon(t)-\omega^\varepsilon(s)\|_{W^{1,1}}\leq \int_s^t\|\partial_t\omega^\varepsilon(\tau)\|_{W^{1,1}}d\tau\leq CM^2|t-s|.\end{equation}
Now let $\eta>0$. Construct a partition of $[0,T]$ as $\{t_0=0,t_1,...,t_N=T\}$, satisfying $$t_i-t_{i-1}<\eta(3CM^2)^{-1}.$$ Applying Theorem \ref{RFK}, we let $\varepsilon_k\to0$ be a sequence such that $\omega^k(t_i)=\omega^{\varepsilon_k}(t_i)$ is convergent in $W^{1,1}(\mathbb{R}^2)$ for each $i=1,...,N$. Now choose $K\in\mathbb{N}$ such that, for all $k,j\geq K$, 
$$\|\omega^k(t_i)-\omega^j(t_i)\|_{W^{1,1}}<\eta/3$$ for $i=1,...,N$. Note that for all $t\in[0,T]$, there is some $i\in\{1,...,N\}$ such that $t\in [t_{i-1},t_i]$. It follows that for all $t\in[0,T]$, there exists $i$ such that for all $k,j\geq K$, \begin{equation*}
\begin{split}
    &\|\omega^k(t)-\omega^j(t)\|_{W^{1,1}}\leq\|\omega^k(t)-\omega^k(t_i)\|_{W^{1,1}}+\|\omega^k(t_i)-\omega^j(t_i)\|_{W^{1,1}}
    \\&\quad+\|\omega^j(t_i)-\omega^j(t)\|_{W^{1,1}}<\eta/3+\eta/3+\eta/3=\eta.
    \end{split}
\end{equation*}
Thus $(\omega^k)_k$ is Cauchy in $L^{\infty}([0,T];W^{1,1}(\mathbb{R}^2))$, and hence there exists $\omega$ in this space for which  \begin{equation}\label{w11conv}
\sup_{t\in[0,T]}\|\omega^k(t)-\omega(t)\|_{W^{1,1}}\to0.
\end{equation} 
Moreover, (\ref{liptime}) shows $\omega^k$ is uniformly Lipschitz in time in the $W^{1,1}$-norm with Lipschitz constant $CM^2$. The above convergence then gives 
$$\omega\in \text{Lip}([0,T];W^{1,1}(\mathbb{R}^2)).$$ 
To obtain the $W^{2,1}$-regularity of $\omega$ for all $t\in[0,T]$, we apply Theorem \ref{DP} to conclude that (up to a possibly further subsequence for each $t$) $\omega^k(t)\rightharpoonup \omega(t)$ in $W^{2,1}(\mathbb{R}^2)$. By lower semicontinuity of the norm under weak convergence, it follows that $$\omega\in L^\infty([0,T];W^{2,1}(\mathbb{R}^2))\cap\text{Lip}([0,T];W^{1,1}(\mathbb{R}^2)),$$ as claimed.

{\bf The limit $\omega$ is a weak solution.} It remains to show that $\omega$ is a weak solution to (\ref{V}) as in Definition \ref{Weakvorticity}. First fix any $q\in (2,\infty)$ and note that by the Hardy-Littlewood-Sobolev inequality,
$$\sup_{t\in[0,T]}\|K*\omega^k(t)-K*\omega(t)\|_{L^q}\leq C\sup_{t\in [0,T]}\|\omega^k(t)-\omega(t)\|_{L^p},$$ 
where $p$ satsifies $1/p=1/q+1/2$. The embedding $W^{1,1}(\mathbb{R}^2)\hookrightarrow L^p(\mathbb{R}^2)$ for $p\in[1,2]$ and (\ref{w11conv}) then imply
\begin{equation}\label{h1convu}
    u^k\to u=K*\omega \text{ in }C([0,T];L^q(\mathbb{R}^2)).
\end{equation}

Now let $\varphi\in C^1([0,T];C^1_0(\mathbb{R}^2))$. Since $\omega^k$ solves (\ref{V}) for each $k\in\mathbb{N}$, we have $$\int_{\mathbb{R}^2}\left(\varphi(T)\omega^k(T)-\varphi_0\omega_0^k\right)\,dx=\int_0^T\int_{\mathbb{R}^2}\left(\omega^k\partial_t\varphi+\omega^ku^k\cdot\nabla\varphi \right)\,dx\,dt.$$
Since $\omega^k\to\omega$ in $W^{1,1}(\mathbb{R}^2)$ uniformly in time, the left hand side of the above satisfies $$\int_{\mathbb{R}^2}\left(\varphi(T)\omega^k(T)-\varphi_0\omega_0^k\right)\, dx\to\int_{\mathbb{R}^2}\left(\varphi(T)\omega(T)-\varphi_0\omega_0\right) \, dx,$$ while the first term on the right hand side satisfies $$\int_0^T\int_{\mathbb{R}^2}\omega^k\partial_t\varphi \,dx\,dt\to\int_0^T\int_{\mathbb{R}^2}\omega\partial_t\varphi \,dx\,dt.$$ For the nonlinear term, we use the bound $\|u^k\|_{L^\infty([0,T]\times\mathbb{R}^2)}\leq M$ given in (\ref{boundedvelocity}) and the bound $\|\omega^k\|_{L^\infty([0,T];L^2(\mathbb{R}^2))}\leq M$ to write
\begin{equation*}
\begin{split}
    &\left|\int_0^T\int_{\mathbb{R}^2}(\omega^ku^k-\omega u)\cdot\nabla\varphi \, dx \, dt\right|
    \leq \int_0^T\int_{\mathbb{R}^2}\Big(|\omega||u^k-u|+|\omega^k-\omega||u^k|\Big)|\nabla\varphi| \,dx \, dt
    \\&\qquad \leq MT\|\nabla\varphi\|_{L^\infty\left([0,T];L^{\frac{q}{q-1}}(\mathbb{R}^2)\right)} \|u^k-u\|_{L^\infty([0,T];L^q(\mathbb{R}^2))}
    \\ &\qquad +MT\|\nabla\varphi\|_{L^\infty([0,T]\times \mathbb{R}^2)}\|\omega^k-\omega\|_{L^\infty([0,T];L^1(\mathbb{R}^2))},
\end{split}
\end{equation*}
which tends to 0 by (\ref{w11conv}) and (\ref{h1convu}). Thus $\omega$ is a weak solution to (\ref{E}) as in Definition \ref{Weakvorticity}.
\section{Global Well-Posedness with Dini Continuous Vorticity}\label{section4}
In this section, we assume that the initial vorticity belongs to $W^{2,1}(\mathbb{R}^2)$ and is Dini continuous (see below), and we show that the solution remains in $W^{2,1}(\mathbb{R}^2)$ for all time. 

Recall the definition of the Dini seminorm for continuous functions $f:\mathbb{R}^d\to\mathbb{R}$, given by
$$|f|_{C_D}:=\int_0^1\sup_{|x-y|\leq r}|f(x)-f(y)| \, \frac{dr}{r}.$$ We define the space of {\em Dini continuous} functions to be the set of $f\in L^{\infty}$ for which $|f|_{C_D} < \infty$, and we denote the space of Dini continuous functions by $C_D$. We prove the following theorem. 
\begin{theorem}\label{Globaltheorem} If $\omega_0\in C_D\cap W^{2,1}(\mathbb{R}^2)$, then the solution $\omega$ obtained in Theorem \ref{Shorttimesec1} belongs to $L^\infty_{loc}(\mathbb{R}_+;C_D\cap W^{2,1} (\mathbb{R}^2))$. Furthermore, $\|\omega(t)\|_{W^{2,1}\cap C_D}:=\|\omega(t)\|_{W^{2,1}}+|\omega(t)|_{C_D}$ grows at most doubly exponentially with $t$.
\end{theorem}
  \noindent Note that the Dini continuity assumption in Theorem \ref{Globaltheorem} is stronger than the uniform continuity already satisfied by $W^{2,1}(\mathbb{R}^2)$ functions, but is weaker than an $\alpha-$H\"older condition with $\alpha>0$, for which global existence is established in \cite{wolibner1933} (see \cite{chemin} for another approach). 
  
  Our strategy for proving Theorem \ref{Globaltheorem} is to establish a bound on the $L^{\infty}$-norm of the velocity gradient which grows at most exponentially in time, provided the initial vorticity is Dini continuous. We then apply this bound to (\ref{globalprep}). A similar bound on the velocity gradient is established by Koch \cite{koch} on a bounded domain; specifically, the author shows that for $\Omega$ bounded,
  \begin{equation}\label{kochbound}
  \|\nabla u(t) \|_{L^{\infty}(\Omega)} \leq C(\| \omega(t) \|_{C_D(\Omega)} + \| \omega(t) \|_{L^{\infty}(\Omega)}).
  \end{equation}
  The author then proceeds to show that the Dini modulus of continuity is preserved by the Euler equations. 
  
  We will follow the methods in \cite{koch}, but the arguments there and the estimate (\ref{kochbound}) in particular require some modification for our setting. Therefore, for the sake of completeness, we include our full argument here. 

Before proving Theorem \ref{Globaltheorem} we establish the following proposition, which is the key ingredient of the proof. 
\begin{prop}\label{velgraddiniprop}
     Let $\omega$ be a smooth solution to $(\ref{V})$ with initial data $\omega_0$, and let $X$ be the particle trajectory map corresponding to $u$. Then there exists $C>0$ independent of time and $\omega_0$ such that, for all $t\geq0$, \begin{equation}\label{velgraddini} 
     \|\nabla u(t)\|_\infty \leq C\Big(\|\omega_0\|_{L^1}+\|\omega_0\|_\infty+|\omega_0|_{C_D}\Big)e^{C\|\omega_0\|_{\infty}t}.
     \end{equation}
\end{prop}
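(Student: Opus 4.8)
The plan is to estimate $\nabla u(t) = K * \nabla\omega(t) + c\,\omega(t)\,I$ by splitting the singular integral $K * \nabla\omega$ into near-field, intermediate, and far-field pieces, exactly as in the classical proof that a Dini-continuous vorticity yields a bounded velocity gradient. Since the $L^1$ and $L^\infty$ norms of $\omega(t)$ are conserved along the flow (as in \eqref{Lpcons}), the term $c\,\omega(t)$ contributes only $\|\omega_0\|_\infty$. The substantive part is to bound $\|K*\nabla\omega(t)\|_\infty$ by a constant multiple of $\|\omega_0\|_{L^1} + \|\omega_0\|_\infty + |\omega(t)|_{C_D}$, and then to control how the Dini seminorm $|\omega(t)|_{C_D}$ grows in time.

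For the pointwise bound on $\nabla u$ at a fixed time, I would write $\nabla K$ as a Calder\'on-Zygmund kernel of the form $\Omega(z/|z|)/|z|^2$ with mean zero on the sphere, and integrate against $\omega(t)$ (after moving the derivative onto $K$). Splitting $\mathbb{R}^2$ into $\{|z|<r\}$, $\{r\le |z|\le 1\}$, and $\{|z|>1\}$ about the evaluation point: the far field is controlled by $\|\omega_0\|_{L^1}$ using that $\nabla K$ decays like $|z|^{-2}$; the near field is handled by subtracting the value $\omega(t,x)$ and using the cancellation of the kernel, so that the integrand is bounded by the modulus of continuity of $\omega(t)$ divided by $|z|^2$, yielding a factor $\int_0^1 \sup_{|x-y|\le \rho}|\omega(t,x)-\omega(t,y)|\,\frac{d\rho}{\rho} = |\omega(t)|_{C_D}$; the intermediate annulus contributes $\|\omega_0\|_\infty$ times a logarithmic factor that is absorbed into the constant. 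This gives the analogue of \eqref{kochbound},
\begin{equation}\label{diniproofkey}
\|\nabla u(t)\|_\infty \le C\big(\|\omega_0\|_{L^1} + \|\omega_0\|_\infty + |\omega(t)|_{C_D}\big).
\end{equation}

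The main obstacle, and the step where Koch's bounded-domain argument must be modified, is propagating the Dini seminorm. Here I would use $\omega(t,x)=\omega_0(X^{-t}(x))$ together with the flow estimate. For two points $x,y$, the quantity $|\omega(t,x)-\omega(t,y)| = |\omega_0(X^{-t}(x))-\omega_0(X^{-t}(y))|$ is governed by the modulus of continuity of $\omega_0$ evaluated at the separation $|X^{-t}(x)-X^{-t}(y)|$. The flow is only log-Lipschitz a priori, so one obtains a bound of Osgood type: denoting by $\varpi(t,r):=\sup_{|x-y|\le r}|\omega(t,x)-\omega(t,y)|$ the modulus of continuity, the separation of trajectories is controlled by an inequality whose integration (via Lemma \ref{Osgood}) shows that the Dini integral $\int_0^1 \varpi(t,r)\,\frac{dr}{r}$ remains finite and grows at most exponentially, with rate governed by $\|\nabla u\|_{L^1_t L^\infty_x}$.

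To close the scheme, I would feed \eqref{diniproofkey} into a Gr\"onwall/Osgood loop: the growth rate of $|\omega(t)|_{C_D}$ is controlled by $\|\nabla u\|_\infty$, which by \eqref{diniproofkey} is in turn controlled by $|\omega(t)|_{C_D}$ itself (plus the conserved $L^1$ and $L^\infty$ norms). Keeping careful track of constants and using that $\|\omega(t)\|_{L^1}=\|\omega_0\|_{L^1}$ and $\|\omega(t)\|_\infty = \|\omega_0\|_\infty$ are time-independent, the linear integral inequality for $|\omega(t)|_{C_D}$ resolves by Lemma \ref{Gronwall} into exponential growth at rate $C\|\omega_0\|_\infty$, yielding
\begin{equation*}
\|\nabla u(t)\|_\infty \le C\big(\|\omega_0\|_{L^1}+\|\omega_0\|_\infty+|\omega_0|_{C_D}\big)e^{C\|\omega_0\|_\infty t},
\end{equation*}
as claimed. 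I expect the delicate point to be obtaining the correct exponential (rather than double-exponential) rate for $\|\nabla u\|_\infty$ in this proposition; the double-exponential growth asserted in Theorem \ref{Globaltheorem} then arises only after substituting this bound into the $W^{2,1}$ estimate \eqref{globalprep} and applying Gr\"onwall a final time.
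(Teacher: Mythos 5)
Your opening step is exactly the paper's: the cutoff-and-cancellation estimate on $\nabla u$, yielding
$\|\nabla u(t)\|_\infty \le C\big(\|\omega_0\|_{L^1}+\|\omega_0\|_\infty + \int_0^1 \sup_{|x-y|\le r}|\omega(t,x)-\omega(t,y)|\,\frac{dr}{r}\big)$,
matches the decomposition (\ref{nablaudecomp}) and the estimates (\ref{neartermpart1}), (\ref{farterm}). The genuine gap is in your propagation step. First, the Osgood/log-Lipschitz machinery is out of place here: the proposition concerns a \emph{smooth} solution, so the flow is genuinely Lipschitz, with $\|\nabla X^{-t}\|_\infty \le \exp\big(\int_0^t\|\nabla u(s)\|_\infty\,ds\big)$ by Gr\"onwall (this is (\ref{flowonebound}) in essence). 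Second, and more seriously, the bound you actually state --- that the Dini integral of $\omega(t)$ ``grows at most exponentially, with rate governed by $\|\nabla u\|_{L^1_tL^\infty_x}$'' --- is a \emph{multiplicative} bound of the form $|\omega(t)|_{C_D}\lesssim |\omega_0|_{C_D}\,e^{C\int_0^t\|\nabla u\|_\infty}$, and such a bound cannot close your loop: feeding it into your Koch-type estimate gives, for $g(t)=\int_0^t\|\nabla u(s)\|_\infty\,ds$, an inequality of the form $g'(t)\le A + B e^{g(t)}$, which is of Osgood type with $\mu(r)=e^r$ and is consistent with finite-time blow-up; it does not yield any global-in-time exponential bound. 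You then assert that the inequality for $|\omega(t)|_{C_D}$ is \emph{linear} with rate $C\|\omega_0\|_\infty$, but that linearity is precisely the crux, and nothing in your middle paragraph produces it.

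What is missing is the paper's key observation: compose with the Lipschitz flow and \emph{change variables in the Dini integral}, splitting at $r=1$:
\begin{align*}
\int_0^1 \sup_{|x-y|\le r}|\omega(t,x)-\omega(t,y)|\,\frac{dr}{r}
&\le \int_0^{\|\nabla X^{-t}\|_\infty}\sup_{|x-y|\le r}|\omega_0(x)-\omega_0(y)|\,\frac{dr}{r}\\
&\le |\omega_0|_{C_D} + 2\|\omega_0\|_\infty \ln\|\nabla X^{-t}\|_\infty\\
&\le |\omega_0|_{C_D} + 2\|\omega_0\|_\infty\int_0^t\|\nabla u(s)\|_\infty\,ds,
\end{align*}
where the portion of the integral over $r\in[1,\|\nabla X^{-t}\|_\infty]$ is bounded using only $\sup_{|x-y|\le r}|\omega_0(x)-\omega_0(y)|\le 2\|\omega_0\|_\infty$, not the Dini seminorm. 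This additive structure --- the coefficient of $\int_0^t\|\nabla u\|_\infty$ is $\|\omega_0\|_\infty$ alone --- is what makes the resulting inequality for $\|\nabla u(t)\|_\infty$ linear, so that Gr\"onwall gives exactly the rate $C\|\omega_0\|_\infty$ in the exponent. (Your log-Lipschitz instinct could be salvaged: the Yudovich flow is H\"older with exponent $e^{-Ct}$, and rescaling the Dini integral by that exponent also gives an exponential-in-time bound, but with rate $C(\|\omega_0\|_{L^1}+\|\omega_0\|_\infty)$ rather than $C\|\omega_0\|_\infty$, i.e.\ a weaker statement than the proposition, reached by a more roundabout route.) With the displayed estimate replacing your middle paragraph, your argument coincides with the paper's proof.
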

\begin{proof}
Let $\varphi \in C_0^\infty(\mathbb{R}^2)$ be a radial function such that $\varphi(x)\equiv1$ for $0\leq |x|\leq \frac{1}{2}$, $\varphi(x)=0$ for $|x|\geq 1$, and $|\nabla\varphi(x)|\leq 4$.  We utilize the Biot-Savart law to write \begin{equation}\label{nablaudecomp}
\begin{split}
    \nabla u(t,x)=\frac{1}{2\pi}&\int_{|x-y|\leq1}\frac{(x-y)^\perp}{|x-y|^2}\varphi(x-y)\nabla\omega(t,y)\, dy
    \\ &+\frac{1}{2\pi}\int_{|x-y|\geq\frac{1}{2}}\frac{(x-y)^\perp}{|x-y|^2}\big(1-\varphi(x-y)\big)\nabla\omega(t,y)\, dy,
\end{split}    
\end{equation}
and we estimate each term in the sum independently.

For the first term, integration by parts and a change of coordinates give
\begin{equation}\label{neartermpart1}
\begin{split}
     &\left|\frac{1}{2\pi}\int_{|x-y|\leq 1}\frac{(x-y)^\perp}{|x-y|^2}\varphi(x-y)\nabla_y\big(\omega(t,y)-\omega(t,x)\big)\,dy\right|
    \\ &=\left|\frac{1}{2\pi}\int_{|x-y|\leq 1}\nabla_y\left(\frac{(x-y)^\perp}{|x-y|^2}\varphi(x-y)\right)\big(\omega(t,y)-\omega(t,x)\big)\,dy\right|
    \\ &\leq C\int_{|x-y|\leq 1}\frac{1}{|x-y|^2}\big|\omega(t,y)-\omega(t,x)\big| \, dy
    \\  &\qquad +C\int_{\frac{1}{2}\leq|x-y|\leq 1}\frac{1}{|x-y|}|\omega(t,y)-\omega(t,x)|\, dy
    \\ &\leq  C\int_0^1\sup_{|x-y|\leq r}|\omega(t,x)-\omega(t,y)|\frac{dr}{r}+C\int_{\frac{1}{2}}^12\|\omega(t)\|_\infty\, dr
    \\  &\leq C\int_0^{1}\sup_{|x-y|\leq r}|\omega(t,x)-\omega(t,y)|\, \frac{dr}{r}+C\|\omega_0\|_\infty,
\end{split}
\end{equation}
where in the last inequality, we used conservation of the $L^\infty$ norm of the vorticity. To estimate the term
$$ C\int_0^{1}\sup_{|x-y|\leq r}|\omega(t,x)-\omega(t,y)|\, \frac{dr}{r},$$
we let $X^{-t}$ denote the inverse particle trajectory map corresponding to the velocity $u$ and use the conservation of vorticity along particle trajectories to obtain, 
\begin{equation}\label{Phi0part2}
\begin{split}
    &C\int_0^{1}\sup_{|x-y|\leq r}|\omega(t,x)-\omega(t,y)|\, \frac{dr}{r}\\
    &\qquad=C\int_0^{1}\sup_{|x-y|\leq r}|\omega_0(X^{-t}(x))-\omega_0(X^{-t}(y))|\, \frac{dr}{r}
    \\ &\qquad \leq C\int_0^{1}\sup_{|x-y|\leq r\|\nabla X^{-t}\|_\infty}|\omega_0(x)-\omega_0(y)|\, \frac{dr}{r}
    \\  &\qquad   =C\int_0^{\|\nabla X^{-t}\|_\infty}\sup_{|x-y|\leq r}|\omega_0(x)-\omega_0(y)|\, \frac{dr}{r}
    \\ &\qquad =C|\omega_0|_{C_D}+C\int_1^{\|\nabla X^{-t}\|_\infty}\sup_{|x-y|\leq r}|\omega_0(x)-\omega_0(y)|\, \frac{dr}{r}.
    \end{split}
    \end{equation}
Now observe that
\begin{equation}\label{Phi0part3}
\begin{split}
    &\int_1^{\|\nabla X^{-t}\|_\infty}\sup_{|x-y|\leq r}|\omega_0(x)-\omega_0(y)|\, \frac{dr}{r} \leq C\int_1^{\|\nabla X^{-t}\|_\infty}\|\omega_0\|_\infty \frac{dr}{r} 
    \\&\qquad = C\|\omega_0\|_\infty\ln\left( \|\nabla X^{-t}\|_\infty\right) \leq C\|\omega_0\|_\infty\int_0^t\|\nabla u(s)\|_\infty\, ds
    ,
\end{split}
\end{equation}
where the last inequality follows from the calculations leading to (\ref{flowonebound}). Substituting (\ref{Phi0part3}) into (\ref{Phi0part2}) and substituting the resulting bound into $(\ref{neartermpart1})$ gives 
\begin{equation}\label{nearterm}
\begin{split}
    &\left|\frac{1}{2\pi}\int_{|x-y|\leq 1}\frac{(x-y)^\perp}{|x-y|^2}\Phi_0(x-y)\nabla_y\omega(t,y)\, dy\right| \\
    &\qquad \leq C\left(|\omega_0|_{C_D}+\|\omega_0\|_\infty+\|\omega_0\|_\infty\int_0^t\|\nabla u(s)\|_\infty\, ds\right).
    \end{split}
\end{equation}

We now estimate the second term in (\ref{nablaudecomp}) as follows: \begin{equation}\label{farterm}
\begin{split}
    &\qquad \qquad \left|\frac{1}{2\pi}\int_{\frac{1}{2}\leq|x-y|}\frac{(x-y)^\perp}{|x-y|^2}(1-\varphi(x-y))\nabla_y\omega(t,y) \, dy\right| \\
    & \leq C\int_{\frac{1}{2}\leq|x-y|}\left(\frac{1}{|x-y|^2}+\frac{1}{|x-y|}\right)|\omega(t,y)| \, dy
\leq C\|\omega(t)\|_{L^1} = C\|\omega_0\|_{L^1},
\end{split}
\end{equation}
where we used conservation of the $L^1$-norm of the vorticity to get the final equality.

Combining (\ref{nablaudecomp}), (\ref{nearterm}), and (\ref{farterm}), we have 
\begin{equation*}
    \|\nabla u(t)\|_\infty\leq C\Big(\|\omega_0\|_{L^1} +\|\omega_0\|_\infty +|\omega_0|_{C_D}\Big)+C\|\omega_0\|_\infty\int_0^t\|\nabla u(s)\|_\infty \, ds.
\end{equation*}
An application of Lemma \ref{Gronwall} gives 
\begin{equation*}
    \|\nabla u(t)\|_\infty \leq C\Big(\|\omega_0\|_{L^1}+\|\omega_0\|_\infty+|\omega_0|_{C_D}\Big)e^{C\|\omega_0\|_{\infty}t},
\end{equation*}
which is $(\ref{velgraddini})$.
\end{proof} 

We conclude this section with the proof of Theorem \ref{Globaltheorem}. 
\begin{proof}[(Proof of Theorem \ref{Globaltheorem})] Suppose $[0,T^*)$ is the maximal time interval on which the solution obtained in Theorem \ref{Shorttimesec1} persists in $W^{2,1}(\mathbb{R}^d).$ Then we have $T^*=\infty$ or $$\limsup_{t\to T^*}\|\omega(t)\|_{W^{2,1}}=\infty.$$ 
To complete the proof, we will show for all $T<\infty$ that $\limsup_{t\to T}\|\omega(t)\|_{W^{2,1}}<\infty$. To this end, note that $(\ref{globalprep})$ implies 
$$\|\omega^\varepsilon(t)\|_{W^{2,1}}\leq \|\omega_0\|_{W^{2,1}}+C\int_0^t\|\nabla u^\varepsilon(s)\|_\infty\|\omega^\varepsilon(s)\|_{W^{2,1}} \, ds 
$$ for all $t\geq 0$, where $\omega^\varepsilon$ is as in the proof of Theorem \ref{Shorttimesec1}. This estimate, together with Proposition \ref{velgraddiniprop}, gives 
\begin{equation}\label{epsstepone}
\begin{split}
    &\|\omega^\varepsilon(t)\|_{W^{2,1}}\leq \|\omega_0\|_{W^{2,1}}\\
&\qquad +C\int_0^t\Big(\|\omega_0^\varepsilon\|_{L^1}+\|\omega_0^\varepsilon\|_\infty+|\omega_0^\varepsilon|_{C_D}\Big)e^{C\|\omega_0^\varepsilon\|_{\infty}s}\|\omega^\varepsilon(s)\|_{W^{2,1}}\, ds.
\end{split}
\end{equation}
An application of $(\ref{fourfacts})_1$ and $(\ref{fourfacts})_2$ gives $$\|\omega_0^\varepsilon\|_\infty\leq \|\omega_0\|_\infty \text{  and  } \|\omega_0^\varepsilon\|_{L^1}\leq \|\omega_0\|_{L^1}.$$ Moreover, for each $r\in (0,1)$,
\begin{equation*}
\begin{split}
    &\sup_{|x-y|\leq r}|\omega_0^\varepsilon(x)-\omega_0^\varepsilon(y)|=\sup_{|x-y|\leq r}\int_{\mathbb{R}^2}\rho^\varepsilon(z)|\omega_0(x-z)-\omega_0(y-z)| \,dz
    \\&\qquad\qquad  \leq \int_{\mathbb{R}^2}\rho^\varepsilon(z)\sup_{|x-y|\leq r}|\omega_0(x-z)-\omega_0(y-z)|\, dz
    \\&\qquad\qquad  =\sup_{|x-y|\leq r}|\omega_0(x)-\omega_0(y)|,
\end{split}
\end{equation*}
    and hence 
    \begin{equation*}
        |\omega_0^\varepsilon|_{C_D}=\int_0^1\sup_{|x-y|\leq r}|\omega_0^\varepsilon(x)-\omega_0^\varepsilon(y)|\frac{dr}{r}\leq\int_0^1\sup_{|x-y|\leq r}|\omega_0(x)-\omega(y)|\frac{dr}{r}=|\omega_0|_{C_D}.
    \end{equation*}
Applying these bounds to (\ref{epsstepone}) gives 
\begin{equation*}
    \|\omega^\varepsilon(t)\|_{W^{2,1}}\leq \|\omega_0\|_{W^{2,1}}+C\int_0^t\Big(\|\omega_0\|_{L^1}+\|\omega_0\|_\infty+|\omega_0|_{C_D}\Big)e^{C\|\omega_0\|_{\infty}s}\|\omega^\varepsilon(s)\|_{W^{2,1}}\, ds.
\end{equation*}
Lemma \ref{Gronwall} implies 
\begin{equation}\label{epsdoubleexp}
\begin{split}
&\|\omega^\varepsilon(t)\|_{W^{2,1}}\leq \|\omega_0\|_{W^{2,1}}\exp\left\{C\Big(\|\omega_0\|_{L^1}+\|\omega_0\|_\infty+|\omega_0|_{C_D}\Big)\int_0^te^{C\|\omega_0\|_\infty s}\,ds\right\}\\
&\qquad \leq \|\omega_0\|_{W^{2,1}}\exp\Big\{\|\omega_0\|_\infty^{-1}\Big(\|\omega_0\|_{L^1}+\|\omega_0\|_\infty+|\omega_0|_{C_D}\Big)\exp\big\{C\|\omega_0\|_\infty t\big\}\Big\},
\end{split}
\end{equation} 
where $C$ is independent of $\varepsilon,$, giving at most double exponential growth of the $W^{2,1}$-norms of the approximate solutions. Following the proof of Theorem \ref{Shorttimesec1}, we find a subsequence of the sequence $\{\varepsilon_k\}_k$ for which $\varepsilon_k\to 0$ and such that for every $t\in[0,T^*)$, $\omega^{\varepsilon_k}(t)$ converges weakly to $\omega(t)$ in $W^{2,1}(\mathbb{R}^2)$. By restricting ourselves to the sequence obtained in that proof, we guarantee the $\omega$ obtained in the limit is identical on $[0,T]$ to that obtained in the proof of Theorem \ref{Shorttimesec1}. Exploiting the lower semicontinuity of the norm with respect to weak convergence, we obtain the same double exponential growth for $\|\omega(t)\|_{W^{2,1}}$ as in (\ref{epsdoubleexp}); that is, 
\begin{equation*}
\|\omega(t)\|_{W^{2,1}}\leq \|\omega_0\|_{W^{2,1}}\exp\Big\{\|\omega_0\|_\infty^{-1}\Big(\|\omega_0\|_{L^1}+\|\omega_0\|_\infty+|\omega_0|_{C_D}\Big)\exp\big\{C\omega_0\|_\infty t\big\}\Big\}.
\end{equation*}
Hence, $\lim_{t\to T^*}\|\omega(t)\|_{W^{2,1}}=\infty$ only if $T^*=\infty.$ This completes the proof.
\end{proof}
\section*{Acknowledgements}
\noindent The authors thank Tarek Elgindi and Misha Vishik for useful conversations.  EC gratefully acknowledges support by the Simons Foundation through Grant No. 429578.  
\Obsolete{\section{Local Well-Posedness for $\beta$-SQG}\label{section5}
In this section we prove the following theorem.
\begin{theorem}
    Let $\beta\in(0,\frac{1}{2})$ and $p\in[1,2]$. Then there exists a constant $C=C(\beta,p)>0$ such that given $\theta_0\in B^{2/p+2\beta}_{p,1}(\mathbb{R}^2)$ and $0<T<(C\|\theta_0\|_{B^{2/p+2\beta}_{p,1}})^{-1}$, there exists $\theta \in L^{\infty}([0,T];B^{2/p+2\beta}_{p,1}(\mathbb{R}^2))$ which solves 
    \begin{equation}\label{gSQG}
    \begin{cases}
        \partial_t \theta +(u\cdot\nabla) \theta=0,
        \\ u = \nabla^\perp(-\Delta)^{-1+\beta}\theta,
        \\ \theta\big|_{t=0}=\theta_0,
    \end{cases}
\end{equation}
on $[0,T]\times\mathbb{R}^2,$ in the sense of distributions.
\end{theorem}
\begin{remark}
    The third index being set to one is what guarantees the velocity field $u$ to be Lipschitz. If $\theta_0\in B^s_{p,q}$ with $s>2/p+2\beta$, then we may allow $q\in[1,\infty]$ and obtain a similar result.
\end{remark}
\\
\vspace{2mm} \begin{proof} Define the sequence of approximate solutions which solve a linear transport equation. That is, let $\theta^0=\theta_0$ and 
 \begin{equation}\label{SQGn}
    \begin{cases}
        \partial_t \theta^{n} +(u^{n-1}\cdot\nabla) \theta^{n}=0,
        \\ u^{n-1} = \nabla^\perp(-\Delta)^{-1+\beta}\theta^{n-1},
        \\ \theta^n\big|_{t=0}=\theta_0.
    \end{cases}
\end{equation} for $n\geq1.$
We lean on a result which can be found in \cite{bcd} (see Theorem 3.19 and Remark 3.15) to obtain an a priori bound on solutions to the above system. 
\begin{theorem} 
\cite{bcd} Let $1\leq p\leq p_1\leq\infty$. Let $f_0\in B^{2/p+2\beta}_{p,1}$ and $v$ be some time dependent vector field in $L^\rho([0,T];B^{-M}_{\infty,\infty})$ for some $M>0$ and $\rho>1$. Assume also $\nabla v\in L^1([0,T];B^{2/p+2\beta-1}_{p_1,1})$. Then there exists a unique solution $f$ to the transport equation,
    $$\partial_tf+v\cdot\nabla f=0,$$ with initial data $f|_{t=0}=f_0$. Furthermore, $f\in C([0,T];B^{2/p+2\beta}_{p,1})$ and satisfies the bound \begin{equation}\label{BesovboundBCD}
   \|f\|_{L^\infty([0,t]; B^{2/p+2\beta}_{p,1})}\leq \|f_0\|_{B^{2/p+2\beta}_{p,1}}\exp\big\{C\int_0^t\|\nabla v(s)\|_{B^{2/p+2\beta-1}_{p_1,1}}ds\big\}.
   \end{equation}
\end{theorem}
\noindent We start by obtaining time a uniform bound for the sequence $(\theta^n)$ in $L^\infty([0,T];B^{2/p+2\beta}_{p,1})$ for some $T>0$ to be chosen later. We focus on the case when $p=1$ because it is the most technical. In the case $p>1,$ one chooses $p_1=p$ and the corresponding estimate to (\ref{uniformbesovboundsetup}) is obtained easily by boundedness properties of the operator $\nabla^\perp(-\Delta)^{-1+\beta}$. 
\\
\\ 
We have, for $p_1\in(1,\infty)$ to be specified later, \begin{align*}
    \|\nabla u^{n-1}\|_{B^{1+2\beta}_{p_1,1}}&\leq \|\Delta_{-1}\nabla u^{n-1}\|_{L^{p_1}}+\sum_{j\geq0}2^{(1+2\beta)j}\|\Delta_j\nabla \nabla^\perp(-\Delta)^{-1+\beta} \theta^{n-1}\|_{L^{p_1}}
    \\ &\leq C\| u^{n-1}\|_{L^{p_1}}+C\sum_{j\geq 0}2^{(4\beta+1)j}\|\Delta_j \theta^{n-1}\|_{L^p_1}
    \\ &\leq C\Big(\| u^{n-1}\|_{L^{p_1}}+\|\theta^{n-1}\|_{B^{1+4\beta}_{p_1,1}}\Big),
\end{align*} 
where we've used Bernstein's inequality. 
Further, \begin{equation*}
\| \nabla u^{n-1}\|_{L^{p_1}}=\|\nabla \nabla^\perp (-\Delta)^{-1+\beta}\theta^{n-1}\|_{L^{p_1}}\leq C\|(-\Delta)^{\beta}\theta^{n-1}\|_{L^{p_1}}\leq C\|    \theta\|_{W^{2\beta,p_1}}\leq C\|\theta\|_{B^{2+2\beta}_{1,1}},
\end{equation*} where we've used $\nabla\nabla^\perp(-\Delta)^{-1}$ defines a Calderon-Zygmund operator. Now notice with the choice of $p_1$ so that $\frac{1}{2}<\frac{1}{p_1}:=\frac{1}{2}+\beta<1$, the Besov embedding $B^{2+2\beta}_{1,1}\hookrightarrow B^{4\beta+1}_{p_1,1}$, the above estimates imply $$\|\nabla u^{n-1}\|_{B^{1+2\beta}_{p_1,1}}\leq C\|\theta^{n-1}\|_{B^{2+2\beta}_{1,1}}. $$ 
Applying ($\ref{BesovboundBCD}$) to ($\ref{SQGn}$) and using the above inequality gives \begin{equation} \label{uniformbesovboundsetup}\|\theta^n\|_{L^\infty([0,t];B^{2+2\beta}_{1,1})}\leq C\|\theta_0\|_{B^{2+2\beta}_{1,1}}\exp\Big\{C\int_0^t\|\theta^{n-1}(s)\|_{B^{2+2\beta}_{1,1}ds}\Big\}.
\end{equation} By a standard induction argument that there exists some $C>0$ and $T>0$ such that \begin{equation}\label{uniformbesov11bound}    
\|\theta^n\|_{L^\infty([0,T];B^{2+2\beta}_{1,1})}\leq C \|\theta_0\|_{B^{2+2\beta}_{1,1}}.
\end{equation}
Next we show strong convergence of the sequence in $L^\infty([0,T],L^2(\mathbb{R}^2))$ by showing it is Cauchy in this space. Let $m,n\in\mathbb{N}$ and note by (\ref{SQGn}), we have $$\big(\partial_t+u^{n+m+1}\cdot\nabla\big)(\theta^{m+n+1}-\theta^{n+1})=(u^{n}-u^{m+n})\cdot \nabla\theta^{n+1}.$$
Since $\nabla \cdot u^{n+m+1}=0$, it follows $$\|\theta^{m+n+1}(t)-\theta^{n+1}(t)\|_{L^2}\leq \int_0^t\|(u^n(s)-u^{n+m}(s))\cdot\nabla \theta^{n+1}(s)\|_{L^2}ds.$$
 So by H\"older's inequality, the Hardy-Littlewood-Sobolev inequality, and the embedding $B^{1+2\beta}_{1,1}(\mathbb{R}^2)\hookrightarrow L^{\frac{2}{1-2\beta}}(\mathbb{R}^2)$ we obtain \begin{align}
   \notag \|\theta^{m+n+1}(t)-\theta^{n+1}(t)\|_{L^2}&\leq
    \int_0^t\|u^n(s)-u^{n+m}(s)\|_{L^{\frac{1}{\beta}}}
    \|\nabla \theta^{n+1}(s)\|_{L^{\frac{2}{1-2\beta}}}ds
    \\\notag &\leq  \int_0^t \|\theta^{m+n}(s)-\theta^{n}(s)\|_{L^2}\|\theta^{n+1}(s)\|_{B^{2+2\beta}_{1,1}}ds.
\end{align}
By our uniform bound (\ref{uniformbesov11bound}), this gives
$$\|\theta^{m+n+1}(t)-\theta^{n+1}(t)\|_{L^2}\leq C\|\theta_0\|_{B^{2+2\beta}_{1,1}}\int_0^t\|\theta^{m+n}(s)-\theta^n(s)\|_{L^2}ds.$$
By induction we obtain 
\begin{equation*}\|\theta^{m+n}-\theta^{n}\|_{L^\infty([0,T];L^2(\mathbb{R}^2)}\leq \frac{1}{n!}\Big(\frac{1}{1-CT\|\theta_0
\|_{B^{2+2\beta}_{1,1}}}\Big)^{n}\|\theta^{m}-\theta_0\|_{L^{\infty}([0,t];B^{2+2\beta}_{1,1})},\end{equation*} for all $n,m\in\mathbb{N}$. That is, $(\theta^n)$ is Cauchy in $L^\infty([0,T];L^2(\mathbb{R}^2)) $ and converges in this space to some $\theta$. By (\ref{uniformbesov11bound}), the Fatou property for Besov spaces, and uniqueness of weak limits, it follows $\theta\in L^\infty([0,T];B^{2+2\beta}_{1,1}(\mathbb{R}^2)).$ Since $B^{2+2\beta}_{1,1}(\mathbb{R}^2)\hookrightarrow H^{1+2\beta}(\mathbb{R}^2)$, we get by the Gagliardo-Nirenberg interpolation inequality, \begin{align*}
    \|\theta^n-\theta\|_{L^\infty([0,T];H^s)}&\leq \|\theta^n-\theta\|_{L^\infty([0,T];L^2)}^{1-\frac{s}{1+2\beta}}\|\theta^n-\theta\|_{L^\infty([0,T];H^{1+2\beta})}^{\frac{s}{1+2\beta}}
    \\&\leq \|\theta^n-\theta\|_{L^\infty([0,T];L^2)}^{1-\frac{s}{1+2\beta}}\|\theta^n-\theta\|_{L^\infty([0,T];B^{2+2\beta}_{1,1})}^{\frac{s}{1+2\beta}}
    \\&\leq C\|\theta^n-\theta\|_{L^\infty([0,T];L^2)}^{1-\frac{s}{1+2\beta}}\|\theta_0\|_{B^{2+2\beta}_{1,1}}^{\frac{s}{1+2\beta}},
    \end{align*}
where the last line uses (\ref{uniformbesov11bound}). Hence $\theta^n\to\theta\in L^\infty([0,T];H^s(\mathbb{R}^2))$ for all $s\in [0,1+2\beta).$ This suffices to pass to the limit in (\ref{SQGn}) and see $\theta$ solves (\ref{gSQG}) in the sense of distributions.
\end{proof} }
\bibliographystyle{acm}
\bibliography{Bib}

\end{document}